\begin{document}
 \title{A note on 2-vertex-connected orientations}
\author{Florian H\"orsch}
 \ead{florian.hoersch@tu-ilmenau.de}
\address{TU Ilmenau, Weimarer Straße 25, Ilmenau, Germany, 98693.  }
\author{Zolt\'an Szigeti}
\ead{ Zoltan.Szigeti@grenoble-inp.fr}
\address{Univ.~Grenoble~Alpes, Grenoble INP, CNRS, G-SCOP, 46 Avenue F\'elix Viallet, Grenoble, France, 38000.  }
\date{\today}

\newtheorem{Statement}{Statement}
\newtheorem{rem}{Remark}
\newtheorem{Proposition}{Proposition}
\newtheorem{coro}{Corollary}
\newtheorem{Theorem}{Theorem}
\newtheorem{Conjecture}{Conjecture}
\newtheorem{Claim}{Claim}
\newtheorem{Lemma}{Lemma}
\newtheorem{prob}{Problem}
\newtheorem{case}{Case}

\newenvironment{proof}{\noindent \textbf{Proof}}{\rule{2mm}{2mm}}

\begin{abstract}
We consider two possible extensions of a theorem of Thomassen characterizing the graphs admitting a 2-vertex-connected orientation. First, we show that the problem of deciding whether a mixed graph has a 2-vertex-connected orientation is NP-hard. This answers a question of Bang-Jensen, Huang and Zhu. For the second part, we call a directed graph $D=(V,A)$ $2T$-connected for some $T \subseteq V$ if $D$ is 2-arc-connected and $D-v$ is strongly connected for all $v  \in T$. We deduce a characterization of the graphs admitting a $2T$-connected orientation from the theorem of Thomassen.
\end{abstract}
\maketitle

\section{Introduction}\label{intro}
In this article, we deal with two possible extensions of a theorem of Thomassen characterizing graphs having a 2-vertex-connected orientation. All undefined notions can be found in Section \ref{prel}.

During the history of graph orientations, the question of characterizing graphs having orientations with certain connectivity properties has played a central role. The following fundamental theorem of Robbins \cite{robb} dates back to 1939.

\begin{Theorem}
A graph has a strongly connected orientation if and only if it is 2-edge-connected.
\end{Theorem}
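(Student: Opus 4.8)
The plan is to prove the two implications separately, the forward direction being essentially immediate and the reverse direction resting on an ear decomposition. For necessity, suppose $G$ admits a strongly connected orientation $D$. Strong connectivity forces $G$ to be connected, so it remains only to rule out a bridge. If $e=uv$ were a bridge, then $G-e$ would split into two components, one containing $u$ and one containing $v$, with $e$ the unique edge joining them. In $D$ this edge receives a single orientation, say from $u$ to $v$; then no directed path can return from $v$ to $u$, since any such path would have to traverse the cut $\{e\}$ in the wrong direction. This contradicts strong connectivity, so $G$ is bridgeless and hence $2$-edge-connected.

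For sufficiency, I would invoke the ear decomposition of $2$-edge-connected graphs: any such $G$ can be written as $G=C\cup P_1\cup\cdots\cup P_k$, where $C$ is a cycle and each $P_i$ is an ear of $G_{i-1}:=C\cup P_1\cup\cdots\cup P_{i-1}$, that is, either an open path whose endpoints are distinct vertices of $G_{i-1}$ and whose internal vertices are new, or a closed cycle meeting $G_{i-1}$ in exactly one vertex. The argument then proceeds by induction on $k$, the inductive claim being that each $G_i$ admits a strongly connected orientation.

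For the base case I orient $C$ as a directed cycle, which is visibly strongly connected. For the inductive step, assume $G_{i-1}$ has been oriented strongly and consider $P_i$. If $P_i$ is an open ear from $u$ to $v$, I orient it as a directed path from $u$ to $v$; then every new internal vertex is reachable from $u$ and reaches $v$, and since $v$ already reaches $u$ inside the oriented $G_{i-1}$, strong connectivity is preserved. If $P_i$ is a closed ear attached at a vertex $u$, I orient it as a directed cycle through $u$, so that $u$ and all new vertices become mutually reachable, and again strong connectivity of the enlarged digraph follows. After $k$ steps this yields a strongly connected orientation of $G=G_k$.

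The only content beyond routine verification is the existence of the ear decomposition in the $2$-edge-connected setting, where one must permit closed ears (open ears alone characterize $2$-vertex-connectivity); I expect justifying this to be the main point requiring care. Alternatively, one could bypass ear decompositions entirely with a depth-first-search argument, orienting tree edges away from the root and back edges toward it, in which case the crux instead becomes showing that bridgelessness guarantees every tree edge is covered by at least one back edge.
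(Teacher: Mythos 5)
The paper contains no proof of this statement for you to be compared against: it is Robbins' theorem of 1939, quoted in the introduction as classical background with a citation to the original article, so your argument has to be judged entirely on its own merits. On those merits it is correct, and it is one of the standard proofs. The necessity direction is sound: a bridge, however oriented, leaves a one-edge directed cut that cannot be crossed in both directions, contradicting strong connectivity. The sufficiency induction over an ear decomposition is also valid: orienting the initial cycle as a circuit and each ear as a directed path (or circuit, for a closed ear) preserves strong connectivity exactly as you say, since the head of each ear already reaches its tail inside the previously oriented part. You correctly identify the one piece of genuine content, namely the existence of the decomposition with \emph{closed} ears permitted; for completeness, it follows easily: if $G_{i}\subsetneq G$ is the part built so far, connectivity of $G$ gives an edge $uw\notin E(G_{i})$ with $u\in V(G_{i})$; either both ends lie in $V(G_{i})$, in which case $uw$ alone is an ear, or $w\notin V(G_{i})$, in which case bridgelessness gives a path from $w$ to $u$ in $G-uw$, and truncating that path at its first return to $V(G_{i})$ and prepending $uw$ yields the next open or closed ear. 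Your alternative depth-first-search argument (tree edges oriented away from the root, back edges toward it, with bridgelessness guaranteeing every tree edge is spanned by a back edge) works equally well and has the added benefit of giving a linear-time algorithm. Either route is a complete and legitimate proof of the statement.
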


For higher arc-connectivity, this theorem was later generalized by Nash-Williams \cite{N60}.

\begin{Theorem}\label{nw}
Let $G$ be a graph and $k$ a positive integer. Then $G$ has a $k$-arc-connected orientation if and only if $G$ is $2k$-edge-connected.
\end{Theorem}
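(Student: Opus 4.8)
The plan is to prove the two implications separately; the forward one is routine, while the reverse one carries all of the difficulty.

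For necessity, suppose $D$ is a $k$-arc-connected orientation of $G$ and let $\emptyset \neq S \subsetneq V(G)$. Every edge of $G$ in the cut $\delta_G(S)$ becomes either an arc leaving $S$ or an arc entering $S$, so $|\delta_G(S)| = d^+_D(S) + d^-_D(S)$. Since $D$ is $k$-arc-connected we have $d^+_D(S) \geq k$ and $d^-_D(S) \geq k$, whence $|\delta_G(S)| \geq 2k$. As $S$ was arbitrary, $G$ is $2k$-edge-connected.

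For sufficiency the clean core is the Eulerian case, which I would isolate first. Suppose $G$ is connected and every vertex has even degree, and orient $G$ along an Eulerian circuit. The resulting orientation $D$ is balanced, i.e. $d^+_D(v) = d^-_D(v)$ for every $v$; summing this identity over any $\emptyset \neq S \subsetneq V(G)$ and noting that the arcs inside $S$ cancel gives $d^+_D(S) = d^-_D(S)$. Together with $d^+_D(S) + d^-_D(S) = |\delta_G(S)| \geq 2k$ this forces $d^+_D(S) = d^-_D(S) \geq k$, so $D$ is $k$-arc-connected. (Note that a $2k$-edge-connected graph with $k \geq 1$ is automatically connected, so the connectivity hypothesis is free.)

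The whole problem is therefore to reduce the general case to the Eulerian one, and this is exactly where I expect the main obstacle, namely the presence of odd-degree vertices. The tempting shortcut — add a matching on the odd-degree vertices (or a new vertex $s$ joined to them) to make the graph Eulerian, orient the enlarged graph, and then discard the extra edges — does \emph{not} work, because deleting arcs, or deleting a balanced auxiliary vertex, can destroy arc-connectivity among the original vertices; already a two-vertex example with arcs $a\to s\to b$ and $b\to s\to a$ shows that removing $s$ can drop the local arc-connectivity to $0$. The correct route is to eliminate the auxiliary structure not by deletion but by \emph{admissible splitting-off}: one repeatedly replaces a pair of edges $xs,sy$ at the auxiliary vertex by a single edge $xy$ while preserving $2k$-edge-connectivity within $V(G)$, arriving at a $2k$-edge-connected \emph{Eulerian} graph to which the previous paragraph applies, and then recovers an orientation of $G$ by the reverse operation, which never decreases arc-connectivity. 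Guaranteeing that admissible pairs exist despite the parity constraints imposed by the odd-degree vertices is precisely the content of the edge-connectivity splitting-off theorems of Lov\'asz and Mader, and it is the technical heart of the argument. An alternative that sidesteps splitting-off is a direct reorientation argument: start from an arbitrary orientation and, whenever some cut has $d^+_D(S) < k$, observe that $2k$-edge-connectivity forces $d^-_D(S) \geq k+1$, so there is slack to reverse a suitable directed path and strictly improve the total deficiency; this is essentially a submodular-flow argument and terminates in a $k$-arc-connected orientation.
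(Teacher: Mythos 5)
A preliminary remark: the paper itself contains no proof of Theorem~\ref{nw}; it is quoted as a classical result with a citation to Nash--Williams~\cite{N60}. So your proposal has to be judged on its own merits, and there it falls short. Your necessity argument and your Eulerian case are both correct and complete. The problem is the reduction of the general case to the Eulerian one, which you rightly call the technical heart and then do not actually carry out --- and the route you sketch does not work as described. Splitting off all edges at the auxiliary vertex $s$ produces a graph $G'=G+M$, where $M$ is a pairing of the odd-degree vertices of $G$; the edges of $M$ are not edges of $G$. An Eulerian orientation of $G'$ is therefore an orientation of $G$ \emph{plus} the arcs of $M$, and your ``reverse operation'' (un-splitting) merely converts each arc $xy$ of $M$ back into the pair $x\to s\to y$, i.e.\ it yields an orientation of $G+s$, not of $G$. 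Either way you end up holding auxiliary arcs that must be deleted, and deletion is exactly the step you yourself declared invalid with the $a\to s\to b$, $b\to s\to a$ example. Quantitatively: if $S$ is a tight cut of $G$, say $d_G(S)=2k$, crossed by $m$ edges of $M$, the Eulerian orientation of $G+M$ only guarantees in-degree $k+m/2$ on $S$, and all $m$ of the $M$-arcs crossing $S$ may point into $S$, so after discarding them the in-degree can drop to $k-m/2<k$. Thus your ``correct route'' collapses back into the ``tempting shortcut'' you rejected; admissibility of the splittings does not help, because it controls edge-connectivity of $G+M$, not of $G$.

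Your alternative route (start from an arbitrary orientation and reverse directed paths to reduce deficiency, ``essentially a submodular-flow argument'') is indeed a known way to prove the theorem, but you only name it. The missing substance is exactly where the difficulty sits: reversing a path from $a$ to $b$ raises the in-degree of every set containing $a$ but not $b$ by one, while \emph{lowering} by one the in-degree of every set containing $b$ but not $a$; sets of the second kind with in-degree exactly $k$ can coexist with the deficient sets (they cannot cross them, but they can be disjoint from them or cover $V$ jointly with them, since in-degree functions are submodular but not posimodular), so a single reversal can create new deficient cuts. One then needs an uncrossing or potential-function argument to show the procedure terminates --- or, alternatively, one invokes Nash--Williams' odd-vertex pairing theorem, Lov\'asz/Mader splitting off used inside an induction on $|V(G)|$ at an \emph{original} vertex, or Frank's orientation theorem. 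Supplying any of these is the actual content of the theorem; as written, your proposal proves only the easy direction and the Eulerian special case.
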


The analogous problem for vertex-connectivity turns out to be much more complicated. The following conjecture was proposed by Frank in \cite{F-HC95}.

\begin{Conjecture}\label{fra}
Let $G=(V,E)$ be a graph and $k$ a positive integer. Then $G$ has a k-vertex-connected orientation if and only if $|V|\geq k+1$ and $G-X$ is $2(k-|X|)$-edge-connected for all $X \subseteq V$ with $|X|\leq k-1$.
\end{Conjecture}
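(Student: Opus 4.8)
The plan is to treat the two implications separately, as they are of very different character. For the necessity direction, suppose $D=(V,A)$ is a $k$-vertex-connected orientation of $G$. That $|V|\geq k+1$ is immediate from the definition. Now fix $X\subseteq V$ with $|X|=j\leq k-1$; I first claim that $D-X$ is $(k-j)$-arc-connected. Take any ordered pair $(u,v)$ of distinct vertices of $V\setminus X$. By Menger's theorem, the $k$-vertex-connectivity of $D$ supplies $k$ internally vertex-disjoint directed paths from $u$ to $v$. Since these paths are internally vertex-disjoint and $u,v\notin X$, each vertex of $X$ is an internal vertex of at most one of them, so deleting $X$ destroys at most $j$ paths and leaves at least $k-j$ of them inside $D-X$; being internally vertex-disjoint, these are in particular arc-disjoint. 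Hence $D-X$ is $(k-j)$-arc-connected, and as it is an orientation of $G-X$, the easy direction of \Cref{nw} shows that $G-X$ is $2(k-j)$-edge-connected, as needed.

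For the sufficiency direction, which is the real content of \Cref{fra}, I would attempt an induction on $k$, aiming to reduce the orientation problem at connectivity level $k$ to one at level $k-1$. The natural machinery is splitting-off together with contraction: one would try to split off a pair of edges at a vertex of low degree, or to contract a carefully chosen subgraph, in such a way that the hypothesised edge-connectivity conditions for every relevant $X$ are preserved, apply the induction hypothesis to obtain an orientation of the reduced graph, and then lift it back. A competing strategy would be to look for a submodular-flow or matroid-based formulation, constructing the orientation while maintaining all the vertex-deletion invariants simultaneously.

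The main obstacle, and the reason \Cref{fra} is posed as a conjecture, lies entirely in this second direction. Vertex-connectivity of orientations does not appear to carry the submodular structure that makes the arc-connectivity analogue in \Cref{nw} tractable, so neither the splitting-off reduction nor a polyhedral description is known to survive for general $k$: the conditions must hold for \emph{all} $X$ with $|X|\leq k-1$ at once, and these constraints interact badly under the local operations above. I would therefore not expect a single argument to cover all $k$, and the realistic goal is to settle small cases --- the case $k=2$ being precisely the theorem of Thomassen on which this paper relies.
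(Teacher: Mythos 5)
Your necessity argument is correct and complete: Menger's theorem gives $k$ internally vertex-disjoint $u$--$v$ paths, deleting $X$ with $|X|=j\leq k-1$ kills at most $j$ of them since $u,v\notin X$, so $D-X$ is $(k-j)$-arc-connected, and the easy direction of Theorem~\ref{nw} then forces $G-X$ to be $2(k-j)$-edge-connected. Note, though, that the paper offers no proof of this statement at all --- it is recorded purely as a conjecture of Frank --- so there is nothing on the sufficiency side to compare your sketch against.

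The genuine gap is in how you assess that sufficiency direction. You present it as an open problem whose proof awaits the right inductive, splitting-off, or submodular-flow machinery, with the difficulty being that the constraints ``interact badly.'' But the paper states explicitly that the conjecture was \emph{disproven} for every $k\geq 3$ by Durand de Gevigney, and moreover that he proved (Theorem~\ref{olivier}) that deciding whether a graph admits a $k$-vertex-connected orientation is NP-hard for every $k\geq 3$ --- which rules out not only this particular characterization but any good characterization of that kind, unless P $=$ NP. So the strategies you propose for general $k$ cannot succeed: the statement they would establish is false, not merely hard. Your closing instinct is the right one, but it is the \emph{whole} truth rather than a fallback position: the only cases in which the conjecture holds are $k=1$ (Robbins, Nash-Williams) and $k=2$ (Thomassen's Theorem~\ref{thom2}), and it is precisely the $k=2$ case that this paper takes as its starting point.
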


Although Conjecture \ref{fra} remained open for a long time, little progress was made on it. Finally, Conjecture \ref{fra} was proven for $k=2$ by Thomassen \cite{CT}. More explicitly, he proved the following  theorem.

\begin{Theorem}\label{thom2}
A graph $G$ has a 2-vertex-connected orientation if and only if $G$ is 4-edge-connected and $G-v$ is 2-edge-connected for all $v \in V$.
\end{Theorem}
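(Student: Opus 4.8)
\textbf{Necessity} is the easy direction. Suppose $D$ is a 2-vertex-connected orientation of $G$; note that $|V| \geq 3$ and that $D$ is strongly connected. For every $v \in V$, the digraph $D-v$ is strongly connected, so its underlying graph $G-v$ is connected and bridgeless, i.e. 2-edge-connected; this is the trivial direction of Robbins's theorem \cite{robb}. To see that $G$ is 4-edge-connected, I would first observe that $D$ is 2-arc-connected: if some proper nonempty $X \subseteq V$ had $d^+_D(X) \leq 1$, then deleting a single suitable vertex (the tail of the unique arc leaving $X$, or its head when $X$ is a single vertex) would leave a nonempty proper vertex set with no outgoing arc, contradicting 2-vertex-connectivity. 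Hence $d^+_D(X) \geq 2$ and $d^-_D(X) \geq 2$ for every proper nonempty $X$, so $d_G(X) = d^+_D(X) + d^-_D(X) \geq 4$, which is exactly 4-edge-connectivity.

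For the converse --- the substantive direction, due to Thomassen --- I would argue by induction on $|E(G)|$, carrying both hypotheses as the invariant. The first reduction is edge deletion: if some edge $e$ can be removed so that $G-e$ still satisfies both conditions, then by induction $G-e$ has a 2-vertex-connected orientation, and orienting $e$ arbitrarily preserves 2-vertex-connectivity, since adding an arc to a digraph cannot destroy the strong connectivity of any $D-x$. Thus I may assume $G$ is \emph{critical}: every edge lies in a tight cut, namely a 4-edge-cut of $G$ or a 2-edge-cut of some $G-v$. The second reduction is splitting off: at a vertex $v$ of degree at least $6$, Mader's splitting-off theorem provides a pair of edges $vu, vw$ whose replacement by a single edge $uw$ yields a smaller graph $G'$ preserving all local edge-connectivities between vertices other than $v$, hence 4-edge-connectivity (as $d_{G'}(v) \geq 4$ still holds); one then lifts a 2-vertex-connected orientation $D'$ of $G'$ to $G$ by replacing the arc $u \to w$ with the directed path $u \to v \to w$.

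The difficulty --- and the reason vertex-connectivity is genuinely harder here than the edge-connectivity setting of Nash--Williams \cite{N60} --- is twofold. First, splitting off a pair lowers $d(v)$ by $2$, so it can only be applied at vertices of degree at least $6$; once these are exhausted, one is left with the essentially $4$-regular (and degree-$5$) core, for which no such reduction exists and the orientation must be built by hand, guided by the rigid structure that criticality forces. Second, even when a splitting is available, the lifting must restore the strong connectivity of $D-x$ for every $x$, and the delicate case is $x = v$: deleting the split vertex destroys the lifted path $u \to v \to w$ and hence the connection across the cut it replaced, so the split pair (and possibly a local reorientation along a directed path) must be chosen to repair $D-v$. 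Making these two points precise --- selecting a splitting admissible simultaneously for global $4$-edge-connectivity, for all the local conditions $G'-x$, and for the $x=v$ lifting, via an analysis of the tight sets through $v$ using submodularity of the cut function, and handling the low-degree base case directly --- is where I expect essentially all the work to lie.
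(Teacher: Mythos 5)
Your necessity argument is complete and correct: strong connectivity of $D-v$ forces $G-v$ to be 2-edge-connected, and your deletion argument (removing the tail, or the head in the singleton case, of the unique arc leaving a deficient set) correctly shows a 2-vertex-connected orientation is 2-arc-connected, whence $d_G(X)=d^+_D(X)+d^-_D(X)\geq 4$. Note, however, that this paper never proves Theorem \ref{thom2} at all: it is quoted from Thomassen \cite{CT} and used as a black box to derive Theorem \ref{huoh}, so the only thing to compare your sufficiency argument against is Thomassen's original proof.

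And there the proposal has a genuine gap: what you have written for sufficiency is a strategy, not a proof, and you concede as much (``where I expect essentially all the work to lie''). Two concrete failures. First, the splitting-off step does not go through as stated: Mader's theorem preserves local edge-connectivities between pairs of vertices other than $v$, which together with $d_{G'}(v)\geq 4$ can salvage 4-edge-connectivity of $G'$, but it gives no control whatsoever over the second invariant, that $G'-x$ be 2-edge-connected for every vertex $x$; that is a property of vertex-deleted subgraphs, not of local edge-connectivities of $G'$, and an admissible pair in Mader's sense can destroy it. Second, even granting a splitting that preserves both hypotheses, the lifting fails precisely at $x=v$: in $D-v$ the directed path $u\to v\to w$ disappears, and strong connectivity of $D'-v$ does not supply the missing connection because it may rely on the arc $uw$. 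You explicitly defer both of these issues, together with the entire degree-4/5 core where no reduction applies, to ``where the work lies'' --- but those items are the whole substance of Thomassen's argument, which is long and famously non-transparent (indeed, this paper remarks that finding a transparent proof of Theorem \ref{thom2} remains desirable). A final sanity check on why these gaps cannot be filled generically: your outline makes no use of anything specific to $k=2$ and could be run verbatim for any $k$, yet by Theorem \ref{olivier} the analogous statement (Conjecture \ref{fra}) is false for every $k\geq 3$. Any correct completion must therefore exploit the value $k=2$ in an essential way, and the proposal never identifies where that happens.
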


On the other hand, Conjecture \ref{fra} was disproven for every $k \geq 3$ by Durand de Gevigney \cite{ODG}. Moreover, he proved the following result which makes a good characterization of the graphs admitting a $k$-vertex-connected orientation for any $k \geq 3$ seem out of reach.

\begin{Theorem}\label{olivier}
The problem of deciding whether a given graph has a $k$-vertex-connected orientation is NP-hard for any $k \geq 3$.
\end{Theorem}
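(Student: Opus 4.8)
The plan is to establish NP-hardness by exhibiting a polynomial-time reduction from a canonical NP-complete problem; a satisfiability variant is the natural source, and I would use a \emph{symmetric} version such as not-all-equal $3$-SAT in order to respect the built-in reversal symmetry of the problem. Indeed, if a digraph $D=(V,A)$ is $k$-vertex-connected then so is the digraph obtained by reversing every arc, since $(D-X)^{\mathrm{rev}}$ is strong whenever $D-X$ is; as reversal flips all edge orientations simultaneously, the true/false encoding should be invariant under global complementation, which is exactly the feature of NAE-SAT (and of hypergraph $2$-colourability). First I would isolate the base case $k=3$, which already sits just beyond the tractable regime delimited by Theorem~\ref{thom2}, and then propagate hardness to every larger $k$ by padding. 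The cleanest padding adds, for each unit of $k$ above $3$, a new apex vertex joined to all existing vertices. The easy direction is then immediate: deleting the apex of a $(k+1)$-vertex-connected orientation of the coned graph leaves a $k$-vertex-connected orientation of the original, because deleting a single vertex lowers vertex-connectivity by at most one.

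For the base case I would construct, from a given formula $\varphi$, a graph $G_\varphi$ assembled from three kinds of pieces glued along a rigid, highly connected frame. Each variable $x_i$ is represented by a \emph{variable gadget} engineered to admit exactly two orientations compatible with $3$-vertex-connectivity, encoding $x_i\in\{\mathrm{true},\mathrm{false}\}$ via the orientation of a distinguished bundle of parallel edges leaving the gadget. Each clause is represented by a \emph{clause gadget} containing a small vertex cut that can be repaired into a strongly connected configuration after the deletion of any two vertices precisely when at least one of its incident literal bundles points the satisfying way. The literal bundles wire each variable gadget to the clause gadgets containing it, so that any $3$-vertex-connected orientation of $G_\varphi$ forces a consistent truth assignment satisfying every clause, while conversely a satisfying assignment prescribes the gadget orientations and can be completed on the frame to a $3$-vertex-connected orientation.

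Two points carry the real difficulty. In the base reduction the crux is \emph{gadget rigidity}: one must prove that the connectivity requirement admits no cheating orientation, i.e.\ that every $3$-vertex-connected orientation of $G_\varphi$ genuinely decomposes into independent, consistent choices at the variable gadgets together with a satisfied state at every clause gadget. Checking that all $\binom{|V|}{2}$ pairwise vertex deletions leave strongly connected subdigraphs, and that surviving the deletions at a clause cut is possible only in the satisfying configuration, is where the bulk of the technical work lies. In the padding step the delicate direction is the converse lifting: given a $k$-vertex-connected orientation of the base graph, one must orient each added cone so that the result is $(k+1)$-vertex-connected. The obstacle here is that deleting a set $X$ of $k+1$ vertices missing the apex can break the base digraph into several strongly connected pieces, and the apex's star must be oriented so as to reconnect them through the apex for \emph{every} such $X$ at once; guaranteeing this simultaneously (for instance by first enlarging the base graph and distributing the apex's in- and out-arcs with care) is the main hurdle of the argument.
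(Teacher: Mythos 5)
You should first know that the paper does not prove this statement at all: Theorem~\ref{olivier} is quoted from Durand de Gevigney \cite{ODG}, and the paper's own reduction (for Theorem~\ref{thom}, the mixed-graph case) is only ``inspired by'' it. Measured against what a complete proof requires, your text is a strategy outline, not a proof, and both places where you yourself locate ``the real difficulty'' are left unresolved. Your choice of a not-all-equal satisfiability variant is indeed the right instinct --- it matches the paper's use of MNAE3SAT, and the reversal-symmetry argument you give is exactly why that problem is the natural source. But for the base case $k=3$ you never construct the variable and clause gadgets; you only postulate their properties (``engineered to admit exactly two orientations compatible with $3$-vertex-connectivity''). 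The analogue of that postulate in the paper is Lemma~\ref{rdftgzuuh}, whose proof is the entire mathematical content of the reduction; moreover, the paper's gadgets achieve rigidity by exploiting \emph{pre-oriented arcs}, a resource that simply does not exist in the undirected setting of Theorem~\ref{olivier}. So the gadgets cannot be borrowed from the paper, and what is missing from your argument is precisely the hard part.

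Second, the padding step is not just an unproved ``hurdle''; as stated it is false. The equivalence you need --- $G$ has a $k$-vertex-connected orientation if and only if the cone over $G$ (one new vertex joined by a single edge to every old vertex) has a $(k+1)$-vertex-connected orientation --- fails in general: take $G=K_{2k+1}$, which has a $k$-vertex-connected orientation (the rotational tournament), while its cone is $K_{2k+2}$, whose minimum degree is $2k+1 < 2(k+1)$; hence no orientation of the cone can have all in- and out-degrees at least $k+1$, and so none is $(k+1)$-vertex-connected. The deeper obstruction you correctly identify (reconnecting the strong components of the base after deleting a $k$-set avoiding the apex) compounds this. A repair is possible --- for instance, join the apex to every vertex by a \emph{pair} of parallel edges oriented as oppositely directed arcs, which makes both directions of the equivalence straightforward at the cost of multiple edges --- but you do not say this, and with a simple apex the forward direction of your reduction for $k>3$ collapses.
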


It remains interesting to search for some big class of graphs that admit highly vertex-connected orientations. The following conjecture was proposed by Thomassen \cite{CT2}.

\begin{Conjecture}\label{appro}
There is a function $f:\mathbb{Z}_+\rightarrow \mathbb{Z}_+$ such that every $f(k)$-vertex-connected graph has a $k$-vertex-connected orientation for all $k \in \mathbb{Z}_+$.
\end{Conjecture}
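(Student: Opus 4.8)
The goal is to produce a single function $f$ and, for every $f(k)$-vertex-connected graph $G=(V,E)$, an orientation $D$ that is $k$-vertex-connected. By Menger's theorem this is equivalent to asking that $|V|\geq k+1$ and that $D-X$ be strongly connected for every $X\subseteq V$ with $|X|\leq k-1$. The plan is to first record what very high connectivity buys us for free: since vertex-connectivity never exceeds edge-connectivity (Whitney), if $G$ is $f(k)$-vertex-connected then $G-X$ is $(f(k)-|X|)$-edge-connected for every such $X$, so already $f(k)\geq 2k$ guarantees that $G-X$ is $2(k-|X|)$-edge-connected, which is exactly the necessary condition appearing in Conjecture~\ref{fra}. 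As that conjecture fails for $k\geq 3$, these edge-connectivity conditions cannot be sufficient, and the entire difficulty is to exploit the surplus $f(k)-2k$ to orient in a way that is globally coherent across all cuts at once.

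My first concrete attempt would be an inductive scheme on $k$, anchored at the base case $k=2$: a $4$-vertex-connected graph is $4$-edge-connected and remains $2$-edge-connected after deleting any vertex, so Theorem~\ref{thom2} yields $f(2)=4$. For the inductive step I would try to peel off structure — deleting a carefully chosen vertex $v$, so that $G-v$ is $f(k-1)$-vertex-connected — then apply the inductive orientation of $G-v$ and reinsert $v$ together with its incident edges, orienting them so as to repair precisely the $k$-cuts destroyed by the deletion.

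A complementary tool I would bring in is tree and arborescence packing: by the theorem of Nash-Williams and Tutte a highly connected graph carries many edge-disjoint spanning trees, and an orientation containing $k$ arc-disjoint spanning out-arborescences and $k$ arc-disjoint spanning in-arborescences rooted at a common vertex forces strong $k$-local connectivity to and from that root. I would then attempt to upgrade such rooted guarantees into the uniform requirement ``$D-X$ strongly connected for all small $X$'' by combining several rootings, relying on the slack provided by $f(k)$ being far larger than $k$.

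The step I expect to be the true obstacle is exactly this passage from local or rooted redundancy to \emph{simultaneous} redundancy over all of the roughly $n^{k-1}$ potential vertex cuts. Vertex-connectivity lacks the submodular structure that makes edge-connectivity orientations tractable via Theorem~\ref{nw}; splitting-off operations preserving vertex-connectivity are far more restrictive than Mader-type edge splittings, and a naive random orientation followed by a union bound over all cuts would demand edge-connectivity growing with $|V|$, which a constant $f(k)$ cannot supply. Converting a bounded amount of global connectivity into coherent protection of exponentially many cuts is precisely the phenomenon that defeats the clean conditions of Conjecture~\ref{fra} for $k\geq 3$, and it is where any proof of Conjecture~\ref{appro} must do its real work.
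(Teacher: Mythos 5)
This statement is a conjecture of Thomassen that the paper explicitly leaves open (``Conjecture \ref{appro} remains open for all $k \geq 3$''); the paper contains no proof of it, so there is nothing to compare your attempt against except the open problem itself. Your text, to its credit, is honest: it is a research plan, not a proof, and you yourself identify the step where it would fail. Your preliminary observations are correct --- $f(2)=4$ does follow from Theorem \ref{thom2}, since a $4$-vertex-connected graph is $4$-edge-connected and stays $3$-edge-connected after deleting any vertex --- but nothing beyond $k=2$ is established.

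The concrete gap is the inductive step, and it is worth spelling out why it cannot work as described. If you delete a vertex $v$ and orient $G-v$ so that it is $(k-1)$-vertex-connected (or even more), then for a set $X$ with $|X|=k-1$ and $v \notin X$, the digraph $(D-v)-X$ may fail to be strongly connected, and strong connectivity of $D-X$ must then be mediated entirely through $v$: every ``broken'' piece needs both an arc into $v$ and an arc out of $v$, and a \emph{single} orientation of the edges at $v$ must achieve this \emph{simultaneously} for all such $X$ --- roughly $n^{k-1}$ of them. This is exactly the local-to-global obstruction you name in your last paragraph; the arborescence-packing route has the same defect, since arc-disjoint in- and out-arborescences rooted at $r$ only protect cuts separating vertices from $r$, not cuts separating two vertices both far from $r$. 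Moreover, Theorem \ref{olivier} (NP-hardness of deciding $k$-vertex-connected orientability for $k\geq 3$) shows that no argument can proceed via a checkable cut condition of the kind that powers Theorem \ref{nw}; any proof of Conjecture \ref{appro} must produce the orientation from high connectivity directly, and no such mechanism is known. So the proposal should be regarded as a correct framing of the difficulty, not as progress toward a proof.
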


Conjecture \ref{appro} remains open for all $k \geq 3$.
\medskip

In this article, we deal with two possible extensions of Theorem \ref{thom2}.
In the first part, we deal with a possible generalization of Theorem \ref{thom2} to the case when some of the edges are pre-oriented. The following is the first important result on orientations of mixed graphs satisfying connecitivity properties. It was proven by Boesch and Tindell \cite{bt}.

\begin{Theorem}
A mixed graph $G=(V, A \cup E)$ has a strongly connected orientation if and only if $d_A^-(X)+\frac{1}{2}d_E(X)\geq 1$ for every nonempty $X \subsetneq V$.
\end{Theorem}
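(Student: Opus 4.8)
The statement is the mixed-graph analogue of Robbins' theorem, so the plan is to prove the two implications separately, with essentially all of the work in the sufficiency direction. Throughout I write $b(X):=d_A^-(X)+\tfrac{1}{2}d_E(X)$ and call a nonempty proper set $X$ \emph{tight} if $b(X)=1$; the hypothesis is that $b(X)\ge 1$ for every nonempty proper $X$.

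For necessity, suppose $\vec G$ is a strong orientation of $G$ and fix a nonempty proper $X$. Then at least one arc of $\vec G$ enters $X$ and at least one leaves it, and each such arc is either an element of $A$ or an oriented copy of an edge of $E$. Reading off these two requirements for $X$ and applying the same observation to $V\setminus X$ yields the inequality $b(X)\ge 1$ for every nonempty proper $X$; this direction is routine degree counting.

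For sufficiency I would induct on $|E|$. If $E=\emptyset$, the hypothesis reads $d_A^-(X)\ge 1$ for every nonempty proper $X$, i.e.\ $(V,A)$ is already strongly connected, and we are done; note that precisely because some edges are pre-oriented we cannot simply invoke Robbins' or Nash--Williams' theorem, which is what makes the statement nontrivial. Otherwise pick an undirected edge $uv\in E$. Orienting it $u\to v$ changes $b$ only on cuts separating $u$ from $v$: on a set containing $v$ but not $u$ the crossing edge (worth $\tfrac{1}{2}$) becomes an entering arc (worth $1$), so $b$ rises by $\tfrac{1}{2}$; on a set containing $u$ but not $v$ it becomes a leaving arc (worth $0$), so $b$ drops by $\tfrac{1}{2}$; and $v\to u$ behaves symmetrically. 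Hence if neither orientation preserves the hypothesis, there is a tight set $Y$ with $u\in Y,\ v\notin Y$ and a tight set $X$ with $v\in X,\ u\notin X$.

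The heart of the argument is to uncross $X$ and $Y$. Both $d_A^-$ and $d_E$ are submodular, so $b$ is submodular and $b(X)+b(Y)\ge b(X\cap Y)+b(X\cup Y)$; moreover the per-link analysis underlying submodularity can be sharpened by tracking the edge $uv$ itself. Since $uv$ crosses both $X$ and $Y$ it contributes $\tfrac{1}{2}+\tfrac{1}{2}=1$ to the left-hand side, whereas it is internal to $X\cup Y$ and external to $X\cap Y$ and so contributes $0$ to the right-hand side; as every other link contributes at least as much to the left as to the right, this gives the strengthened inequality $b(X)+b(Y)\ge b(X\cap Y)+b(X\cup Y)+1$. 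With $b(X)=b(Y)=1$ this forces $b(X\cap Y)+b(X\cup Y)\le 1$, contradicting the hypothesis $b\ge 1$ on the two sets $X\cap Y$ and $X\cup Y$. Thus one of the two orientations of $uv$ is safe and we recurse on the mixed graph with one fewer undirected edge.

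The main obstacle is exactly this uncrossing step together with its degenerate cases: one must guarantee that $X\cap Y\neq\emptyset$ and $X\cup Y\neq V$ so that the hypothesis actually applies to them, which I would secure by choosing $X$ and $Y$ to be \emph{minimal} tight sets of their respective types (or, when the intersection degenerates, by uncrossing $X$ against $V\setminus Y$ on the side of $v$). It is also worth emphasising that the coefficient $\tfrac{1}{2}$ on $E$ is exactly what produces one full unit of slack from the edge $uv$ in the strengthened inequality, which is the mechanism driving the contradiction; everything else reduces to bookkeeping of in-degrees across cuts.
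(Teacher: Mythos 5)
Your necessity argument is a non sequitur, and in fact necessity cannot be proved at all, because the inequality as printed is a misstatement of the Boesch--Tindell theorem (the paper only quotes this result from the literature and never proves or uses it, so there is no proof in the paper to compare against). Strong connectivity of an orientation $\vec G$ gives, for a nonempty proper $X$, exactly the two inequalities $d_A^-(X)+e^-(X)\ge 1$ and $d_A^+(X)+e^+(X)\ge 1$, where $e^-(X)$ and $e^+(X)$ count the edges oriented into and out of $X$, so that $e^-(X)+e^+(X)=d_E(X)$. Summing these yields only $d_A^-(X)+d_A^+(X)+d_E(X)\ge 2$; neither the full unit carried by $d_A^+(X)$ nor the full unit carried by an edge oriented into $X$ can be converted into the expression $b(X)=d_A^-(X)+\tfrac12 d_E(X)$. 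Concretely, take $V=\{u,v\}$, $A$ consisting of one arc from $u$ to $v$, and $E=\{uv\}$: orienting the edge from $v$ to $u$ gives a strongly connected digraph, yet $b(\{u\})=0+\tfrac12<1$. The correct characterization is that $G$ is strongly connected as a mixed graph and bridgeless, i.e.\ $d_A^-(X)+d_E(X)\ge 1$ and $d_A^-(X)+d_A^+(X)+d_E(X)\ge 2$ for every nonempty proper $X$; the condition you are asked to prove is strictly stronger than this.

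Your sufficiency direction is a true implication (the hypothesis $b\ge 1$ does imply the correct Boesch--Tindell condition above), but the induction you propose cannot establish it: the invariant ``$b(X)\ge 1$ for all nonempty proper $X$'' is not maintainable one edge at a time. Take the triangle on $\{u,v,w\}$ with $A=\emptyset$: every nonempty proper set is crossed by at least two edges, so $b\ge 1$ holds; yet after orienting $uv$ as $u\to v$ the set $\{u\}$ has $b$-value $\tfrac12$, and after orienting it as $v\to u$ the set $\{v\}$ has $b$-value $\tfrac12$. So neither orientation is ``safe'', and the conclusion of your uncrossing step is simply false. This shows that the degenerate case you flagged, $X\cap Y=\emptyset$, is not a removable technicality but the exact point where the argument dies: in the triangle, $Y=\{u\}$ and $X=\{v\}$ are the unique minimal tight sets, they are disjoint, the strengthened submodular inequality $b(X)+b(Y)\ge b(\emptyset)+b(X\cup Y)+1$ holds with equality ($2=0+1+1$) and gives no contradiction, and your fallback of uncrossing $X$ against $V\setminus Y=\{v,w\}$ also gives equality with no extra unit, because the edge $uv$ still crosses both $\{v\}$ and $\{v,w\}$. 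A correct proof of the (correct) theorem must instead maintain the weaker invariant --- strong connectivity of the mixed graph together with bridgelessness --- when orienting edges one by one, or proceed by an ear-decomposition argument; the half-integral cut function $b$ is the wrong quantity to carry through the induction.
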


For general arc-connectivity, this problem has been solved by Frank \cite{book} who obtained a pretty technical characterization of mixed graphs admitting a $k$-arc-connected orientation for all $k \in \mathbb{Z}_+$ using the theory of generalized polymatroids.
For higher vertex-connectivity, the possibility of a good characterization of the mixed graphs admitting a $k$-vertex-connected orientation has been ruled out by Theorem \ref{olivier} for any $k \geq 3$. However, the case of $k=2$ remained open. 
\medskip

The first main contribution of this work is to show that there is also no hope to find a good characterization for this problem. More formally, we consider the following algorithmic problem:
\medskip

\noindent \textbf{2-vertex-connected orientation of mixed graphs (2VCOMG):}
\smallskip

\noindent\textbf{Input:} A mixed graph $G=(V,A \cup E)$.
\smallskip

\noindent\textbf{Question:} Does $G$ have a 2-vertex-connected orientation?
\medskip

The question of determining the complexity of this problem was first hinted at by Thomassen in \cite{CT} and then asked explicitely by Bang-Jensen, Huang and Zhu \cite{bhz}. Our main contribution is the following answer to this problem.

\begin{Theorem}\label{thom}
2VCOMG is NP-hard.
\end{Theorem}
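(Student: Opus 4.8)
The plan is to prove NP-hardness by a reduction from a known NP-hard problem. Since we are deciding whether a mixed graph admits a 2-vertex-connected orientation, and the only hard part is orienting the undirected edges $E$ while the arcs $A$ are fixed, the natural strategy is to reduce from a satisfiability-type or graph-coloring-type problem where a binary choice (the orientation of each edge) must be made subject to local consistency constraints. Because the undirected version of the problem is polynomial by Theorem~\ref{thom2}, the hardness must be forced by clever use of the pre-oriented arcs.

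First I would design \emph{gadgets}. The key observation is that in a mixed graph one can use directed arcs to rigidly force certain parts of any valid orientation, thereby encoding logical variables and constraints. I would build a \emph{variable gadget} in which, up to symmetry, there are exactly two ways to complete a 2-vertex-connected orientation, corresponding to the truth values \texttt{true} and \texttt{false} of a Boolean variable; the arcs would be arranged so that a cut of size $4$ or a vertex whose removal leaves a cut of size $2$ becomes violated unless the undirected edges are oriented consistently. Then I would build a \emph{clause gadget} (for, say, $3$-SAT, or more likely a constraint that matches the local structure, such as \textsc{NAE-3SAT} or a variant of graph $2$-colourability) that is completable to a 2-vertex-connected orientation if and only if the incoming truth assignment satisfies the clause. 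The conditions from Theorem~\ref{thom2} --- that the orientation be $4$-edge-connected and that deleting any single vertex leaves a $2$-edge-connected (equivalently strongly connected) digraph --- give two separate families of constraints to exploit, and I would use the $4$-edge-connectivity to propagate truth values along ``wires'' and the ``$G-v$ strongly connected'' condition to enforce the clause logic.

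The main technical work, and the step I expect to be the main obstacle, is proving \emph{correctness} of the reduction in both directions. The forward direction (a satisfying assignment yields a 2-vertex-connected orientation) is typically a matter of exhibiting the orientation and checking the two conditions of Theorem~\ref{thom2} hold, which is routine but tedious. The hard direction is showing that \emph{every} 2-vertex-connected orientation of the constructed mixed graph induces a consistent, satisfying truth assignment: here I must rule out ``cheating'' orientations that locally deviate from the intended gadget behaviour yet still manage to be globally 2-vertex-connected. This requires a careful structural analysis showing that the forcing arcs leave no unintended degrees of freedom, i.e. that any orientation avoiding a small edge-cut or a bad vertex must respect the gadget semantics. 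I would prove this by analyzing the potential violating sets $X$ with $d^-(X)$ small and the potential cut vertices $v$, showing that each gadget must be oriented in one of its two canonical states and that adjacent gadgets must agree across the wires.

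Finally I would argue that the construction has size polynomial in the input formula and that verifying the two conditions of Theorem~\ref{thom2} is polynomial, so the reduction is a valid polynomial-time many-one reduction, completing the proof that 2VCOMG is NP-hard. I would choose the source problem so that its instances map cleanly onto the available gadget structure; if a straightforward reduction from $3$-SAT proves awkward because of the symmetric nature of edge orientations, I would instead reduce from a problem whose own symmetry matches, such as \textsc{Monotone NAE-3SAT} or a suitable hypergraph $2$-colouring problem, which tends to align better with orientation constraints.
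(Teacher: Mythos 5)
You have correctly guessed the paper's source problem --- the paper indeed reduces from Monotone NAE-3SAT (Theorem~\ref{sathard}), and for essentially the reason you give: reversing every undirected edge of a gadget preserves the relevant connectivity constraints, so the natural constraint to encode is a complementation-symmetric one like not-all-equal rather than plain satisfiability. You have also correctly identified the two proof obligations (constructing an orientation from a feasible assignment, and showing that every 2-vertex-connected orientation induces a feasible assignment) and the need for a polynomial-size construction.

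Nevertheless, there is a genuine gap: the proposal stops exactly where the mathematical content of the theorem begins. Everything you describe --- ``I would build a variable gadget with exactly two valid completions,'' ``I would build a clause gadget completable iff the clause is NAE-satisfied,'' ``I would rule out cheating orientations'' --- is a restatement of what any gadget reduction must accomplish, not a demonstration that such gadgets exist for this particular problem. The entire difficulty lies in the design: in the paper, the variable gadget for $x$ is a cycle $B^x$ whose internal vertices each carry exactly one pre-oriented arc in and one out, so that 2-arc-connectivity forces the two incident undirected edges at each such vertex to be oriented one-in, one-out, i.e.\ the whole cycle must become a circuit in one of two directions (condition \eqref{a} of Lemma~\ref{rdftgzuuh}); the clause gadget is a single vertex $z_C$ with pre-oriented arcs $pz_C$, $z_Cq$ and three undirected edges into the variable gadgets, where 2-arc-connectivity at $z_C$ forces at least one edge in and one out, which is precisely the NAE condition \eqref{c}; and the crucial coupling between a variable's circuit direction and the orientation of the clause edge (condition \eqref{b}) is extracted from the strong connectivity of $\vec{G}-t_C^x$ and $\vec{G}-w_C^x$. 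The converse direction is not ``routine but tedious'' checking either: it requires explicitly exhibiting pairs of internally disjoint arborescence-like path systems ($F_1,\dots,F_4$ in the paper) and invoking an extension principle (Proposition~\ref{vert}) to assemble local 2-vertex-connectivity into global 2-vertex-connectivity. Without any of these constructions, your proposal does not yet constitute a proof; it is a (well-oriented) plan whose feasibility is exactly the content of the theorem.
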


Our reduction that proves Theorem \ref{thom} is inspired by the one used by Durand de Gevigney when proving Theorem \ref{olivier}.
\medskip

In the second part, we deal with a connectivity property that generalizes both 2-vertex-connectivity and 2-arc-connectivity and was introduced by Durand de Gevigney and the second author in \cite{DS}. Namely a given digraph $D=(V,A)$ is called {\it $2T$-connected }for some $T \subseteq V$ if $D$ is 2-arc-connected and $D-v$ is strongly connected for all $v \in T$. We prove the following theorem characterizing the graphs $G=(V,E)$ admitting a $2T$-connected orientation for some given $T \subseteq V$.

\begin{Theorem}\label{huoh}
Let $G$ be a graph and $T \subseteq V(G)$. Then $G$ has a $2T$-connected orientation if and only if $G$ is $4$-edge-connected and $G-v$ is 2-edge-connected for all $v \in T$.
\end{Theorem}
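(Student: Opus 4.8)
The plan is to prove the two directions separately, obtaining the hard (sufficiency) direction from Thomassen's theorem (\cref{thom2}) via a vertex-expansion gadget.

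Necessity is routine and I would dispatch it first. Suppose $D$ is a $2T$-connected orientation of $G$. For any $\emptyset \neq X \subsetneq V$, 2-arc-connectivity gives $d^-_D(X) \geq 2$ and $d^-_D(V\setminus X)\geq 2$, whence $d_G(X) = d^-_D(X) + d^+_D(X) \geq 4$; thus $G$ is $4$-edge-connected. For $v \in T$ the digraph $D-v$ is strongly connected, so its underlying graph $G-v$ is connected and bridgeless, i.e.\ $2$-edge-connected.

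For sufficiency, assume $G$ is $4$-edge-connected and $G-v$ is $2$-edge-connected for every $v\in T$. I would build an auxiliary graph $\hat G$ by replacing each $v \in V\setminus T$ with a gadget $H_v$: create one port-vertex $x_e$ for every edge $e$ incident with $v$, reattach the outer end of $e$ to $x_e$, and link the port-vertices (there are at least $4$ of them, by $4$-edge-connectivity) cyclically by a doubled cycle. The point of $H_v$ is that it is itself $4$-edge-connected and stays $2$-edge-connected after the deletion of any single one of its vertices, while contracting $H_v$ back to one vertex recovers $v$. One then applies \cref{thom2} to the graph $\hat G$ (once its hypotheses are verified) to obtain a $2$-vertex-connected orientation $\hat D$, and lets $D$ be the orientation of $G$ obtained by contracting each oriented gadget and deleting the resulting loops. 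Since directed $2$-vertex-connectivity implies $2$-arc-connectivity, $\hat D$ is $2$-arc-connected, and $\hat D - w$ is strongly connected for every vertex $w$.

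The remaining work is to check that $\hat G$ satisfies the hypotheses of \cref{thom2} and that $D$ is $2T$-connected. For $4$-edge-connectivity of $\hat G$, a cut either leaves each gadget undivided, so that it matches a cut of $G$ of size at least $4$, or it separates some $H_v$, in which case the internal edges of the $4$-edge-connected $H_v$ already contribute at least $4$. The genuinely delicate step, which I expect to be the main obstacle, is to show that $\hat G - w$ is $2$-edge-connected for every vertex $w$. When $w\in T$ this follows from the $2$-edge-connectivity of $G-w$ together with the fact that replacing the remaining vertices of $V\setminus T$ by connected gadgets preserves $2$-edge-connectivity. When $w = x_e$ is a port of some $H_v$, the crucial observation is that for every $Y \subseteq V(G)\setminus\{v\}$ the cut of $G$ around $Y$ has size at least $4$, so after the deletion of the one external edge at $x_e$ at least $3$ edges still cross the corresponding cut, keeping it of size at least $2$; cuts that slice a gadget are controlled by its internal edge-connectivity, which remains at least $2$ after removing one vertex. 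Finally, $D$ is $2T$-connected: every directed cut of $D$ lifts to a directed cut of the $2$-arc-connected $\hat D$ of the same size (the gadget-internal arcs having become loops), so $D$ is $2$-arc-connected; and for $v\in T$ the digraph $D-v$ is a quotient of the strongly connected $\hat D - v$, hence is itself strongly connected. The bulk of the technical effort thus concentrates in the case analysis for $\hat G - x_e$, where the uniform lower bound of $4$ on all cuts of $G$ is exactly what absorbs the loss of a single port edge.
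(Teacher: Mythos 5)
Your proposal is correct and takes essentially the same approach as the paper: blow up each vertex of $V\setminus T$ into a doubled cycle, check that the resulting graph satisfies the hypotheses of Theorem~\ref{thom2} (the paper packages your cut analysis into Proposition~\ref{blowup}, and handles the deleted-port case by noting that $G$ minus the at most two external edges at that vertex stays $2$-edge-connected), take a $2$-vertex-connected orientation, and contract the gadgets back. The only cosmetic difference is that your gadget has one port per incident edge, whereas the paper uses a double cycle on $\max\{3,\lceil d_G(v)/2\rceil\}$ vertices with at most two external edges per new vertex; nothing essential changes.
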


Observe that Theorem \ref{huoh} implies both Theorem \ref{thom2} and Theorem \ref{nw} for $k=2$ as $2T$-connectivity corresponds to 2-arc-connectivity for $T=\emptyset$ and to 2-vertex-connectivity for $T=V$. The proof of Theorem \ref{huoh} works by a rather simple deduction from Theorem \ref{thom2}. It would be nice to find a proof of Theorem \ref{huoh} that does not use Theorem \ref{thom2} and hence to get a transparent proof of Theorem \ref{thom2}.
\medskip

The rest of this article is structured as follows: In Section \ref{prel}, we give some more formal definitions and some preliminary results. In Section \ref{redu}, we give the reduction that proves Theorem \ref{thom}. In Section \ref{2-T}, we prove Theorem \ref{huoh}. Finally, in Section \ref{conc}, we conclude our work.
\section{Preliminaries}\label{prel}

We first give some basic notation in graph theory. A {\it mixed graph} consists of a vertex set $V$,  an arc set $A$ and an edge set $E$. If $A=\emptyset$, then $G$ is a graph and if $E=\emptyset$, then $G$ is a digraph. For a single vertex $v$, we often use {\boldmath$v$} instead of $\{v\}$. For some mixed graph $G=(V,A \cup E)$ and some $X \subseteq V$, we use {\boldmath$d_A^-(X)$} for the number of arcs in $A$ whose tail is in $V-X$ and whose head is in $X$, {\boldmath$d_A^+(X)$} for $d_A^-(V-X)$ and {\boldmath$d_E(X)$} to denote the number of edges in $E$ that have exactly one endvertex in $X$. For some $u,v \in V$, an {\it $uv$-path} in $G$ is a sequence of vertices $v_1,\ldots,v_t$ sucht that $u=v_0,v=v_t$ and for all $i=0,\ldots,t-1$ either $v_iv_{i+1}\in E$ or $v_iv_{i+1}\in A$. Two $uv$-paths are called {\it internally disjoint} if they share no vertices apart from $u$ and $v$. 
For a vertex set $X\subseteq V$ and a vertex $v \in V-X$, a {\it $(v,X)$-path} is a path from $v$ to a vertex of $X$. Similarly, a {\it $(X,v)$-path} is a path from a vertex of $X$ to $v$.
Further, for some $X \subseteq V$, $G$ is called {\it $k$-vertex-connected in $X$} if $|V|\geq k+1$ and there are $k$ internally disjoint $uv$-paths for any $u,v \in X$. Also, $G$ is called {\it $k$-vertex-connected} if $G$ is $k$-vertex-connected in $V$. 
For some $X \subseteq V$, we denote by {\boldmath$G[X]$} the subgraph of $G$ induced on $X$.

A graph $G=(V,E)$ is called {\it $k$-edge-connected}  for some positive integer $k$ if $d_E(X)\geq k$ for every nonempty $X \subsetneq V$. A digraph $D=(V,A)$ is called {\it $k$-arc-connected} for some positive integer $k$ if $d_A^-(X)\geq k$ for every nonempty $X \subsetneq V$. If $D$ is $1$-arc-connected then we say that it is {\it strongly connected}.

A connected graph with every vertex of degree $2$ is called a {\it cycle} and  a {\it double cycle} is obtained from a cycle by duplicating every edge. A strongly connected orientation of a cycle is called a {\it circuit}. A digraph $D=(V,A)$ whose underlying graph does not contain a cycle is called an {\it $r$-in-arborescence ($r$-out-arborescence)} if $r\in V$ and $D$ contains a path from $v$ to $r$ (from $r$ to $v$) for every  $v\in V$.

Given two graphs $G$ and $H$ and a vertex $v$ of $G$, {\it blowing up} $v$ into $H$ means that we replace $v$ by $H$ and we replace every edge $wv$ incident to $v$ in $G$ by an edge $wu$ for some vertex $u$ in $H.$
\medskip

We now give one basic result on vertex-connectivity in digraphs.

\begin{Proposition}\label{vert}
Let $D=(V,A)$ be a digraph, $X \subseteq V$ such that $D$ is 2-vertex-connected in $X$ and $v \in V-X$. If $D$ contains two $(v,X)$-paths whose vertex sets only intersect in $v$ and $D$ contains two $(X,v)$-paths whose vertex sets only intersect in $v$, then $D$ is 2-vertex-connected in $X \cup v$.
\end{Proposition}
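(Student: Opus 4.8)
The plan is to verify the definition of 2-vertex-connectivity directly. Since $D$ is already 2-vertex-connected in $X$ and $|V|\ge 3$ holds automatically, it suffices to produce, for every $w\in X$, two internally disjoint $vw$-paths and two internally disjoint $wv$-paths. These two tasks are symmetric: reversing every arc of $D$ turns $wv$-paths into $vw$-paths, preserves 2-vertex-connectivity in $X$, and interchanges the roles of the two given $(v,X)$-paths and the two given $(X,v)$-paths. Hence I would apply the argument below both to $D$ and to its reverse, so that it is enough to find two internally disjoint $vw$-paths for an arbitrary fixed $w\in X$, using only the two given $(v,X)$-paths $P_1,P_2$.

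First I would clean up $P_1$ and $P_2$ by truncating each at the first vertex it meets in $X$; this keeps them meeting only in $v$ and makes their endvertices $x_1,x_2\in X$ distinct, with all internal vertices outside $X$. The core is a Menger-type argument: if there were no two internally disjoint $vw$-paths, then (apart from the degenerate case $vw\in A$, dispatched directly by taking the arc $vw$ together with a path routed through $P_1$ or $P_2$ and the connectivity in $X$) there would be a single vertex $z\notin\{v,w\}$ whose deletion destroys every $vw$-path. Since $P_1$ and $P_2$ meet only in $v$ and $z\ne v$, at least one of them, say $P_2$, avoids $z$, so $v$ still reaches $x_2$ in $D-z$; in particular $x_2\ne w$ and $x_2\ne z$.

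The contradiction then comes as follows: any $x_2w$-path in $D-z$ could be concatenated with $P_2$ to give a $vw$-walk, hence a $vw$-path, in $D-z$, which is impossible. Thus $z$ separates $x_2$ from $w$ in $D$. But $x_2$ and $w$ are distinct vertices of $X$ with $z\notin\{x_2,w\}$, and $D$ is 2-vertex-connected in $X$, so there are two internally disjoint $x_2w$-paths; deleting the single vertex $z$ can meet at most one of them, leaving an $x_2w$-path in $D-z$ and contradicting that $z$ separates $x_2$ from $w$. This yields two internally disjoint $vw$-paths, and by the reversal symmetry two internally disjoint $wv$-paths, for every $w\in X$, which is exactly 2-vertex-connectivity in $X\cup v$.

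I expect the only genuine subtlety to be the correct invocation of Menger's theorem when the arc $vw$ is present, where the bare ``no separating vertex'' criterion fails; I would handle this either by the direct construction mentioned above or, uniformly, by first subdividing the arcs of $D$, which leaves all hypotheses intact and can be contracted back afterwards. The remaining points — that one of $P_1,P_2$ avoids $z$, that the relevant vertices are genuinely distinct, and that walks can be shortened to paths — are routine bookkeeping.
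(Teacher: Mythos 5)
Your proof is correct. Note, however, that the paper itself gives no proof of Proposition~\ref{vert} at all --- it is stated as a basic preliminary fact and used as a black box in the proof of Lemma~\ref{rdftgzuuh} --- so there is no authorial argument to compare yours against; what you have done is fill in an omitted proof. Your argument is sound: the reversal symmetry correctly reduces everything to producing two internally disjoint $vw$-paths; truncating $P_1,P_2$ at their first vertices in $X$ makes the endpoints $x_1,x_2$ distinct with interiors outside $X$; and the Menger-based contradiction is airtight, since a separating vertex $z\neq v$ misses one of $P_1,P_2$, and it cannot separate $x_2$ from $w$ because the two internally disjoint $x_2w$-paths guaranteed by 2-vertex-connectivity in $X$ cannot both contain $z$. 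You also correctly flagged the one genuine trap --- Menger's vertex form requires $vw\notin A$ --- and both of your fixes (the direct construction concatenating $P_i$ with an $x_iw$-path avoiding $v$, or subdividing all arcs first) work; in the direct construction one only needs the easy observation that the resulting walk never uses the arc $vw$, since the first arc of $P_i$ ends outside $X\cup\{w\}$ or at $x_i\neq w$, so the extracted path is internally disjoint from the arc $vw$.
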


We also need  one property on edge-connectivity in graphs.

\begin{Proposition}\label{blowup}
Given two graphs $G$ and $H$  and a vertex $v$ of $G,$ if $G$ and $H$ are $k$-edge-connected then so is the graph obtained from $G$ by blowing up $v$ into $H$.
\end{Proposition}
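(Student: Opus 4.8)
Proposition \ref{blowup} is a purely edge-connectivity statement, so the plan is to verify the cut definition directly: writing $G'=(V',E')$ for the graph obtained from $G$ by blowing up $v$ into $H$, I would show that $d_{E'}(X)\geq k$ for every nonempty $X\subsetneq V'$. Here $V'=(V(G)-v)\cup V(H)$, and the edges of $G'$ are of three types: the edges of $G$ not incident to $v$, the edges of $H$, and the \emph{attachment edges}, i.e.\ for each edge $wv$ of $G$ the reassigned edge $wu$ with $u\in V(H)$. Set $W=V(H)$ and $U=V(G)-v$. The key observation driving the whole argument is that each attachment edge has its $W$-endvertex inside the copy of $H$; so whether such an edge crosses a cut of $G'$ depends, on the $W$-side, only on how the cut treats $W$ as a block. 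This suggests splitting into two cases according to whether the cut separates $W$ or not.

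In the first case, $W\subseteq X$ or $W\cap X=\emptyset$, i.e.\ the cut does not split the copy of $H$. I would pull the cut back to $G$ by contracting $W$ to $v$: define $X'=X$ if $W\cap X=\emptyset$ (so $X\subseteq U$) and $X'=(X\cap U)\cup v$ if $W\subseteq X$. The claim is $d_{E'}(X)=d_{E(G)}(X')$. Indeed, edges of $H$ have both endvertices in $W$, hence on the same side, and never cross; edges of $G$ avoiding $v$ cross $X$ in $G'$ exactly when they cross $X'$ in $G$; and an attachment edge $wu$ (from $wv$) crosses $X$ in $G'$ exactly when $w$ and the block $W$ lie on opposite sides, which is precisely when $wv$ crosses $X'$ in $G$. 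One then checks that $X'$ is a nonempty proper subset of $V(G)$ in both subcases (if $W\cap X=\emptyset$ then $X'=X\subseteq U$ is nonempty and omits $v$; if $W\subseteq X$ then $X'\ni v$ is nonempty and, since $X\neq V'$ forces $X\cap U\neq U$, it omits some vertex of $U$), so $k$-edge-connectivity of $G$ yields $d_{E'}(X)=d_{E(G)}(X')\geq k$.

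In the second case, $X\cap W$ is a nonempty proper subset of $W$, i.e.\ the cut splits the copy of $H$. Here I would simply restrict attention to the edges of $H$: every edge of $H$ joining $X\cap W$ to $W\setminus X$ is present in $G'$ and crosses $X$, so $d_{E'}(X)\geq d_{E(H)}(X\cap W)\geq k$ by $k$-edge-connectivity of $H$ applied to the nonempty proper subset $X\cap W$ of $V(H)$. Combining the two cases gives $d_{E'}(X)\geq k$ for all nonempty $X\subsetneq V'$, which is exactly $k$-edge-connectivity of $G'$. There is no deep obstacle here; the only point requiring care is the edge correspondence in the first case — in particular confirming that attachment edges are counted correctly despite the freedom in choosing their $H$-endvertex, and verifying that the pulled-back set $X'$ is nonempty and proper in each subcase. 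The fact that the choice of endvertices in the blow-up is irrelevant to the counting is what makes the proposition hold for \emph{any} blow-up, and this is the subtlety worth stating explicitly.
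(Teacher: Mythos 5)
Your proof is correct and complete. Note that the paper states Proposition \ref{blowup} without any proof at all (it is treated as a folklore preliminary), so there is no written argument to compare against; your direct verification is precisely the standard one that the authors leave implicit. The two-case split --- pulling the cut back to $G$ when it does not separate $V(H)$, and invoking the $k$-edge-connectivity of $H$ on $X\cap V(H)$ when it does --- together with your careful check that the pulled-back set $X'$ is nonempty and proper, and that attachment edges cross $X$ in the blown-up graph exactly when the corresponding edges at $v$ cross $X'$ in $G$, settles the statement with no gaps.
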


The algorithmic problem we need for our reduction is MNAE3SAT.
\medskip

\noindent {\bf Monotone not-all-equal-3SAT (MNAE3SAT)}
\smallskip

\noindent {\bf Input:} A set $X$ of boolean variables, a formula consisting of a set $\mathcal{C}$ of clauses each containing 3 distinct variables, none of which are negated.
\smallskip

\noindent {\bf Question:} Is there a truth assignment to the variables of $X$ such that every clause in $\mathcal{C}$ contains at least one true and at least one false literal?

An assignment satisfying the above condition will be called {\it feasible}.
\medskip

This problem will be used in the reduction which is justified by the following result due to Schaefer \cite{schaefer}.

\begin{Theorem}\label{sathard}
MNAE3SAT is NP-complete.
\end{Theorem}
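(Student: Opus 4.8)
The plan is to prove the two halves of NP-completeness separately. Membership in NP is immediate: a truth assignment is a polynomial-size certificate, and one checks in linear time that every clause contains both a true and a false variable. The substance is NP-hardness, which I would establish by a chain of polynomial-time reductions starting from ordinary $3$SAT and ending at MNAE3SAT, introducing the ``not-all-equal'' requirement first and removing negations last. Throughout, write $\mathrm{NAE}(\ell_1,\dots,\ell_r)$ for the constraint that the literals $\ell_1,\dots,\ell_r$ are not all assigned the same truth value; the basic structural fact I would exploit is that $\mathrm{NAE}$ is invariant under simultaneously flipping all variables, so NAE-type instances can never force an absolute truth value, only relative ones.

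First I would reduce $3$SAT to the intermediate problem of NAE-satisfiability of $4$-clauses with negations allowed. Introduce one global fresh variable $s$ and replace each clause $(\ell_1\vee\ell_2\vee\ell_3)$ by $\mathrm{NAE}(\ell_1,\ell_2,\ell_3,s)$. If the $3$SAT formula is satisfied by $\phi$, extend $\phi$ by setting $s$ false; each clause then has a true literal and a false $s$, so the four literals are not all equal. Conversely, given a NAE-satisfying assignment, by the flip symmetry I may assume $s$ is false, and then each $\mathrm{NAE}(\ell_1,\ell_2,\ell_3,s)$ being non-constant forces some $\ell_i$ true, so the original clause is satisfied. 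Next I would pass from $4$-clauses to $3$-clauses in the standard way: replace each $\mathrm{NAE}(\ell_1,\ell_2,\ell_3,\ell_4)$ by $\mathrm{NAE}(\ell_1,\ell_2,z)$ and $\mathrm{NAE}(\overline{z},\ell_3,\ell_4)$ with a fresh variable $z$, which preserves NAE-satisfiability, the value of $z$ recording whether $\ell_1,\ell_2$ already split the clause. After discarding any clause that contains a variable together with its negation, since such a clause is automatically non-constant, I obtain an equivalent instance of NAE-$3$SAT with three distinct variables per clause.

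The final and crucial step is to remove negations while staying within $\mathrm{NAE}$ constraints, producing a genuinely monotone instance. For each variable $x$ I would introduce a fresh variable $x'$ intended to equal $\overline{x}$, replace every occurrence of $\overline{x}$ by $x'$, and append the gadget consisting of the four clauses $\mathrm{NAE}(x,x',c_1)$, $\mathrm{NAE}(x,x',c_2)$, $\mathrm{NAE}(x,x',c_3)$, $\mathrm{NAE}(c_1,c_2,c_3)$ with fresh variables $c_1,c_2,c_3$. I claim this gadget is satisfiable exactly when $x\neq x'$: if $x=x'$ then the first three clauses force $c_1,c_2,c_3$ all equal to the common complementary value, making the last clause constant and hence violated, whereas if $x\neq x'$ the first three clauses are automatically satisfied and one may set $c_1=c_2=0$ and $c_3=1$. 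Thus in every satisfying assignment $x'=\overline{x}$, so the monotone instance is NAE-satisfiable if and only if the NAE-$3$SAT instance is, and all its clauses are monotone with three distinct variables.

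The main obstacle is precisely this last step. Because $\mathrm{NAE}$ is closed under global complementation, there is no monotone gadget that pins a variable to a fixed value, so the reduction cannot simply substitute a constant for $\overline{x}$; instead it must enforce the relative constraint $x\neq x'$ using only positive NAE-clauses. The gadget above does this by a short forcing chain that manufactures a monochromatic triple, and hence a violated clause, in exactly the two forbidden cases where $x$ and $x'$ are both true or both false. Verifying the correctness of the whole chain then reduces to the three routine equivalences indicated above, together with the observation that every reduction step is computable in polynomial time.
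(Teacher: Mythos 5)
Your proposal is correct, but it is worth pointing out that the paper does not prove this statement at all: Theorem \ref{sathard} is simply cited from Schaefer's paper, where the NP-completeness of monotone not-all-equal 3SAT is a consequence of his general dichotomy theorem for satisfiability problems. What you give is the self-contained alternative: an explicit chain of polynomial reductions, 3SAT to NAE-4SAT (global padding variable $s$ plus the complementation symmetry of $\mathrm{NAE}$), then to NAE-3SAT (splitting variable $z$), then to the monotone version (the complement-variable gadget). All three equivalences check out; in particular your gadget $\mathrm{NAE}(x,x',c_1)$, $\mathrm{NAE}(x,x',c_2)$, $\mathrm{NAE}(x,x',c_3)$, $\mathrm{NAE}(c_1,c_2,c_3)$ does enforce $x'=\overline{x}$: if $x=x'$ the first three clauses force $c_1=c_2=c_3=\overline{x}$ and the fourth clause fails, while if $x\neq x'$ one can take $c_1=c_2$ false and $c_3$ true. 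Your route buys transparency and needs nothing beyond Cook's theorem, whereas the paper's citation buys brevity and rests on a far more general classification. One loose end you should close: the paper's definition of MNAE3SAT requires each clause to contain three \emph{distinct} variables, and your chain only guarantees this if the initial 3SAT instance has three distinct variables per clause (a clause such as $(x\vee x\vee y)$ would propagate to a monotone clause on only two variables). This is harmless, since 3SAT remains NP-complete under that restriction, but it needs to be said at the start of the reduction.
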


\section{The reduction}\label{redu}

Let $\Phi=(X,\mathcal{C})$ be an instance of MNAE3SAT.  The set of pairs $(x,C)$ such that $x\in C\in\mathcal{C}$ is denoted by $P(\Phi).$ In the following, we first create an instance of 2VCOMG and then show that it is a positive instance if and only if $\Phi$ is a positive instance of MNAE3SAT.
\medskip

We construct a mixed graph $G=(V,A \cup E)$ as follows. First, let $V$ contain a set $Q$ of three vertices $p,q$ and $r$. Further, $V$ contains a set $Z$ containing one vertex $z_C$ for every $C \in \mathcal{C}$. Finally, for every $(x,C)\in P(\Phi)$, $V$ contains a set $R_C^x$ of $4$ vertices $\{t_C^x,u_C^x,w_C^x,y_C^x\}$.  First, let $A$ contain the arcs $pq,qp,pr,rp,qr,rq$. Further, for every $C \in \mathcal{C}$, $A$ contains  the arcs  $pz_C$ and $z_Cq$. Finally, for every $(x,C)\in P(\Phi)$, $A$ contains the arcs of the path $p,t_C^x,u_C^x,y_C^x,u_C^x, w_C^x,q$. First, let $E$ contain an edge $z_Cu_C^x$ for every $(x,C)\in P(\Phi)$. Now for every $x \in X$, let $C_1,\ldots,C_{\mu (x)}$ be an arbitrary ordering of the clauses in $\mathcal{C}$ containing $x$. Let $b_1^x=r$ and for $i=1,\ldots ,\mu(x)$,  let $b_{3i-1}^x=y_{C_i}^x, b_{3i}^x=w_{C_i}^x$ and $b_{3i+1}^x=t_{C_i}^x$. We add the edges of the cycle $B^x=b_1^x,b^x_2,\ldots,b^x_{3\mu(x)+1},b^x_1$ to $E$. This finishes the construction of $G$. Note that the size of $G$ is clearly polynomial in the size of $\Phi$. A drawing can be found in Figure \ref{constr}. 

\begin{figure}[h]
\begin{center}
  \includegraphics[width=.23 \linewidth]{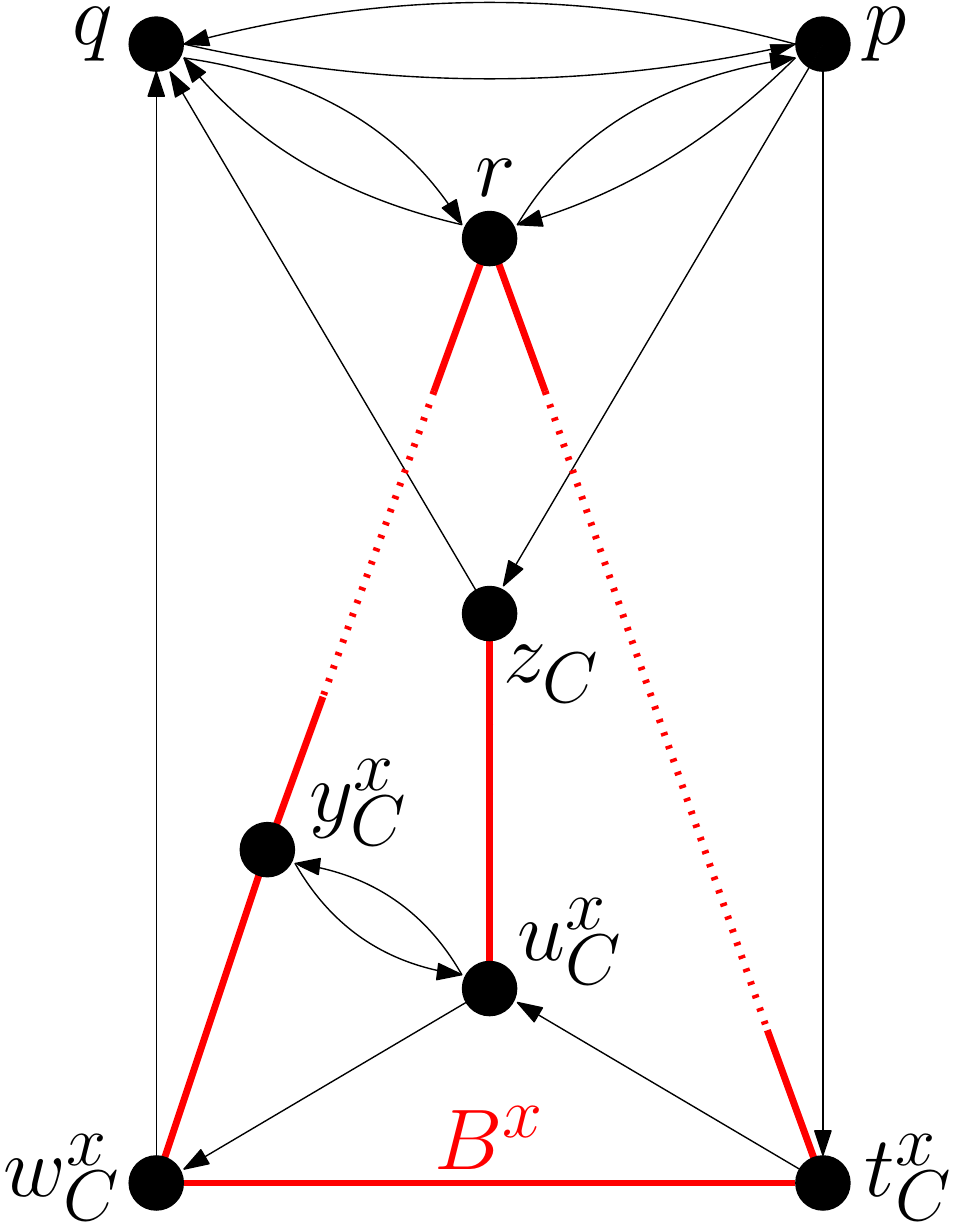}
  \caption{A schematic drawing of $G$ containing $Q$ and $R_C^x$ and $z_C$ for some $(x,C) \in P(\Phi)$.}\label{constr}
\end{center}
\end{figure}

For some $x \in X,$ we will refer to the circuit $b_1^x,b_2^x,\ldots,b_{3\mu(x)+1}^x,b_1^x$ as $\overrightarrow{B^x}$  and to the circuit $b_1^x,b_{3\mu(x)+1}^x,\ldots,b_2^x,b_1^x$ as $\overleftarrow{B^x}$.
\medskip

To  show that $G$ is a positive instance of 2VCOMG if and only if $\Phi$ is a positive instance of MNAE3SAT we need the following lemma.

\begin{Lemma}\label{rdftgzuuh}
An orientation $\vec{G}=(V,A \cup \vec{E})$ of $G$ is 2-vertex-connected if and only if 
\begin{eqnarray}
&&\text{$\vec{G}[B^x]=\overrightarrow{B^x}$ or $\vec{G}[B^x]=\overleftarrow{B^x}$ \hskip 1.8truecm for every $x \in X$,}\label{a}\\
&&\text{$u_C^xz_C\in \vec{E}$ if and only if $\vec{G}[B^x]=\overrightarrow{B^x}$ \hskip .4truecm for every $(x,C)\in P(\Phi)$,}\label{b}\\
&&\text{$u_C^{x_1}z_C, z_Cu_C^{x_2}\in\vec{E}$ for some $x_1,x_2\in C$ \hskip .2truecm for every $C \in \mathcal{C}$.}\label{c}
\end{eqnarray}
\end{Lemma}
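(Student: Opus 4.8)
The plan is to prove the two directions of the biconditional separately, with the "if" direction (conditions \eqref{a}--\eqref{c} imply 2-vertex-connectivity) being the substantial part. First I would analyze the structure of $G$ carefully. The gadget $Q=\{p,q,r\}$ is already strongly interconnected by the six arcs among $p,q,r$, so $\vec{G}[Q]$ is always 2-vertex-connected, and it is natural to take $Q$ as a ``core'' and try to extend 2-vertex-connectivity outward using \cref{vert}. The key observation to extract is that every vertex outside $Q$ lies on the fixed arc-path $p,t_C^x,u_C^x,y_C^x,u_C^x,w_C^x,q$ (for the vertices in some $R_C^x$) or is a clause vertex $z_C$ with fixed arcs $pz_C$ and $z_Cq$; the only edges whose orientation is free are those of the cycles $B^x$ and the edges $z_Cu_C^x$. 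So the whole question reduces to understanding how the orientations of these free edges create the second internally-disjoint path needed at each vertex.

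For the forward direction, I would assume $\vec G$ is 2-vertex-connected and derive each of \eqref{a}, \eqref{b}, \eqref{c} by exhibiting, for a carefully chosen vertex set $X$, a cut vertex that must exist if the condition fails. For \eqref{a}, the point is that $B^x$ is a cycle all of whose vertices have degree $2$ within $B^x$ together with one incident fixed arc; if $\vec G[B^x]$ is not one of the two consistent circuit orientations, then some internal vertex of a ``broken'' path on the cycle has both its cycle-neighbours pointing the wrong way and becomes separable from the rest, contradicting 2-vertex-connectivity. For \eqref{b}, I would look at the vertex $u_C^x$: its in/out behaviour depends on the orientation of $z_Cu_C^x$ together with the two fixed arcs $t_C^xu_C^x$ and $u_C^x w_C^x$ and the subdivision vertex $y_C^x$; the orientation of the cycle $B^x$ forces the directions at $t_C^x$ and $w_C^x$, and consistency of the two internally disjoint $(u_C^x,Q)$- and $(Q,u_C^x)$-paths forces the stated equivalence. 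For \eqref{c}, I would use the vertex $z_C$: it has the fixed arcs $pz_C, z_Cq$ and the edges $z_Cu_C^x$ for the three $x\in C$; if all these edges point the same way, then $z_C$ fails to have both an independent in-route and out-route avoiding $Q$, so some vertex of $Q$ is a cut vertex.

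For the converse, assume \eqref{a}--\eqref{c} hold and build up 2-vertex-connectivity by repeated application of \cref{vert}, starting from the set $Q$, which is 2-vertex-connected. I would add vertices in a convenient order: first the cycle vertices $b_i^x$ (since $\vec G[B^x]$ is a circuit through $r\in Q$, each such vertex lies on a directed cycle through $Q$, giving immediately two internally disjoint $(v,Q)$- and $(Q,v)$-paths along the two arcs of the circuit, so \cref{vert} applies), then the vertices $u_C^x$ and $y_C^x$ and the clause vertices $z_C$. The crucial bookkeeping is that conditions \eqref{b} and \eqref{c} guarantee precisely the two ``missing'' independent routes at $u_C^x$ and $z_C$: condition \eqref{b} aligns the orientation of the edge $z_Cu_C^x$ with the circuit direction of $B^x$ so that $u_C^x$ acquires a second path disjoint from the one through $t_C^x,w_C^x$, and condition \eqref{c} supplies $z_C$ with both an incoming edge from some $u_C^{x_2}$ and an outgoing edge to some $u_C^{x_1}$, yielding two internally disjoint paths to and from $Q$.

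The main obstacle I anticipate is the careful route-tracing in the converse direction: one must verify that the two $(v,X)$-paths (and the two $(X,v)$-paths) required by \cref{vert} really are internally disjoint at each step, which requires keeping track of which vertices have already been absorbed into the 2-vertex-connected set $X$ and ensuring the newly constructed paths reuse only the fixed arcs and the correctly oriented free edges without colliding. The subdivision vertex $y_C^x$ and the shared vertex $u_C^x$ (which the arc-path visits twice, via $u_C^x y_C^x u_C^x$) need special attention, as does checking that $|V|\ge 3$ so that 2-vertex-connectivity is not vacuously violated. Organizing the order of vertex additions so that each application of \cref{vert} has its hypotheses readily available is where most of the genuine work lies.
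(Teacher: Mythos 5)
Your forward direction is essentially the paper's own argument: \eqref{a} and \eqref{c} follow from the fact that in a 2-vertex-connected orientation every vertex needs in- and out-degree at least $2$ (equivalently, your cut-vertex phrasing), and your path-based argument at $u_C^x$ for \eqref{b} is a reformulation of the paper's observation that after deleting $t_C^x$ (resp.\ $w_C^x$) the set $\{u_C^x,w_C^x,y_C^x\}$ (resp.\ $\{u_C^x,y_C^x\}$) has an entering (resp.\ leaving) arc only if the edge $z_Cu_C^x$ or the cycle edge at $y_C^x$ is oriented suitably. So far this is sound, if underdeveloped.

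The converse direction, however, contains a genuine error at its very first step. You claim that each cycle vertex $b_i^x$ can be absorbed by Proposition \ref{vert} ``immediately'' because the circuit $\vec{G}[B^x]$ passes through $r\in Q$, the two arcs of the circuit at $b_i^x$ providing the required paths. But a circuit through $v$ and $r$ supplies exactly \emph{one} $(v,Q)$-path and \emph{one} $(Q,v)$-path, whereas Proposition \ref{vert} demands \emph{two} $(v,X)$-paths whose vertex sets meet only in $v$ \emph{and two} $(X,v)$-paths whose vertex sets meet only in $v$; the circuit alone can never satisfy this. The second paths for the cycle vertices cannot be taken on $B^x$ at all: for instance, when $\vec{G}[B^x]=\overrightarrow{B^x}$, the first $(y_C^x,Q)$-path is $y_C^x,w_C^x,t_C^x,\dots,r$ along the circuit, and any second path meeting it only in $y_C^x$ must leave via the arc $y_C^xu_C^x$ and then continue $u_C^x,z_C,q$ --- which exists precisely because \eqref{b} forces $u_C^xz_C\in\vec{E}$ in this case. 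In other words, the second routes for $t_C^x,w_C^x,y_C^x,u_C^x$ must be built from the fixed arcs through $p,q$ and from the edge $z_Cu_C^x$, their structure depends on the direction of the circuit (hence the two-case analysis), and they must be verified pairwise disjoint for each vertex; this is exactly the substance of the paper's proof, carried out via the four arborescences $F_1,\dots,F_4$. What you dismiss as bookkeeping is the core of the argument, and the justification you actually give would fail at the first application of Proposition \ref{vert}. (A smaller sign of the same confusion: $y_C^x=b_{3i-1}^x$ \emph{is} a cycle vertex, so it cannot be postponed to a later stage of your ordering.)
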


\begin{proof}
First suppose that $\vec{G}$  is 2-vertex-connected. 

Since for every $(x,C)\in P(\Phi)$, the vertices $t_C^x,w_C^x$ and $y_C^x$ have one arc entering in $A$, one arc leaving in $A$ and two edges entering in $E$, \eqref{a} follows.

Let $(x,C)\in P(\Phi)$. For some $i$, we have $y_C^x=b_{3i-1}^x.$ Since $\vec G-t_C^x$ is strongly connected, $\{u_C^x,w_C^x,y_C^x\}$ has no arc entering in $A$ and two edges entering in $E$, at least one of $z_Cu_C^x$ and $b_{3i-2}^xb_{3i-1}^x$ exists in $\vec{E}$. Since $\vec G-w_C^x$ is strongly connected, $\{u_C^x,y_C^x\}$ has no arc leaving in $A$ and two edges entering in $E$, at least one of $u_C^xz_C$ and $b_{3i-1}^xb_{3i-2}^x$ exists in $\vec{E}$. We obtain that $u_C^xz_C\in \vec{E}$ if and only if $b_{3i-2}^xb_{3i-1}^x \in \vec{E}$. Now \eqref{a} yields  \eqref{b}.

Since for every $C \in \mathcal{C}$, the vertex $z_C$ has one arc entering in $A$, one arc leaving in $A$ and three edges entering in $E$, \eqref{c} follows.
\medskip

Now suppose that \eqref{a}, \eqref{b} and \eqref{c} hold.

We first show that $\vec{G}$  is 2-vertex-connected  in $Q\cup R_C^x$ for every $(x,C)\in P(\Phi)$. We fix some  $(x,C)\in P(\Phi)$ and for convenience,  we denote $z_C,t_C^x,u_C^x,w_C^x,y_C^x$ by $z,t,u,w,y$, respectively.  Note that $\vec{G}[Q]$ is 2-vertex-connected. We distinguish two cases depending on the orientation of $B^x$ in $\vec{G}$. By \eqref{a}, we have either $\vec{G}[B^x]=\overrightarrow{B^x}$ or $\vec{G}[B^x]=\overleftarrow{B^x}$.
\medskip

\noindent {\bf Case 1.} $\vec{G}[B^x]=\overrightarrow{B^x}$. Observe that  $\vec{G}[B_x]$ consists of a path $S_1$ from $r$ to $y$ disjoint from  $\{t, w\}$, of the arcs $yw, wt$ and of a path $S_2$ from $t$ to $r$  disjoint from $\{y, w\}$. By \eqref{b}, we have  $uz\in \vec{E}$. Let $F_1$ be the $r$-out-arborescence consisting of $S_1$ and the  arcs $yu, yw$ and $wt$. Let $F_2$ be the $p$-out-arborescence consisting of  the  arcs $pt, tu, uw$ and $uy$. Then $F_1$ and $F_2$ contain two $(Q,v)$-paths whose vertex sets only intersect in $v$  for every vertex $v$ in $R_C^x.$  Let $F_3$ be the $r$-in-arborescence consisting of $S_2$ and the  arcs $yw, uw$ and $wt$. Let $F_4$ be the $q$-in-arborescence consisting of  the  arcs $tu, yu, uz, zq$ and $wq$. Then $F_3$ and $F_4$ contain two $(v,Q)$-paths whose vertex sets only intersect in $v$  for every vertex $v$ in $R_C^x$.  An illustration can be found in Figure \ref{fig:case1}.

\begin{figure}[h]
  \includegraphics[width=.45 \linewidth]{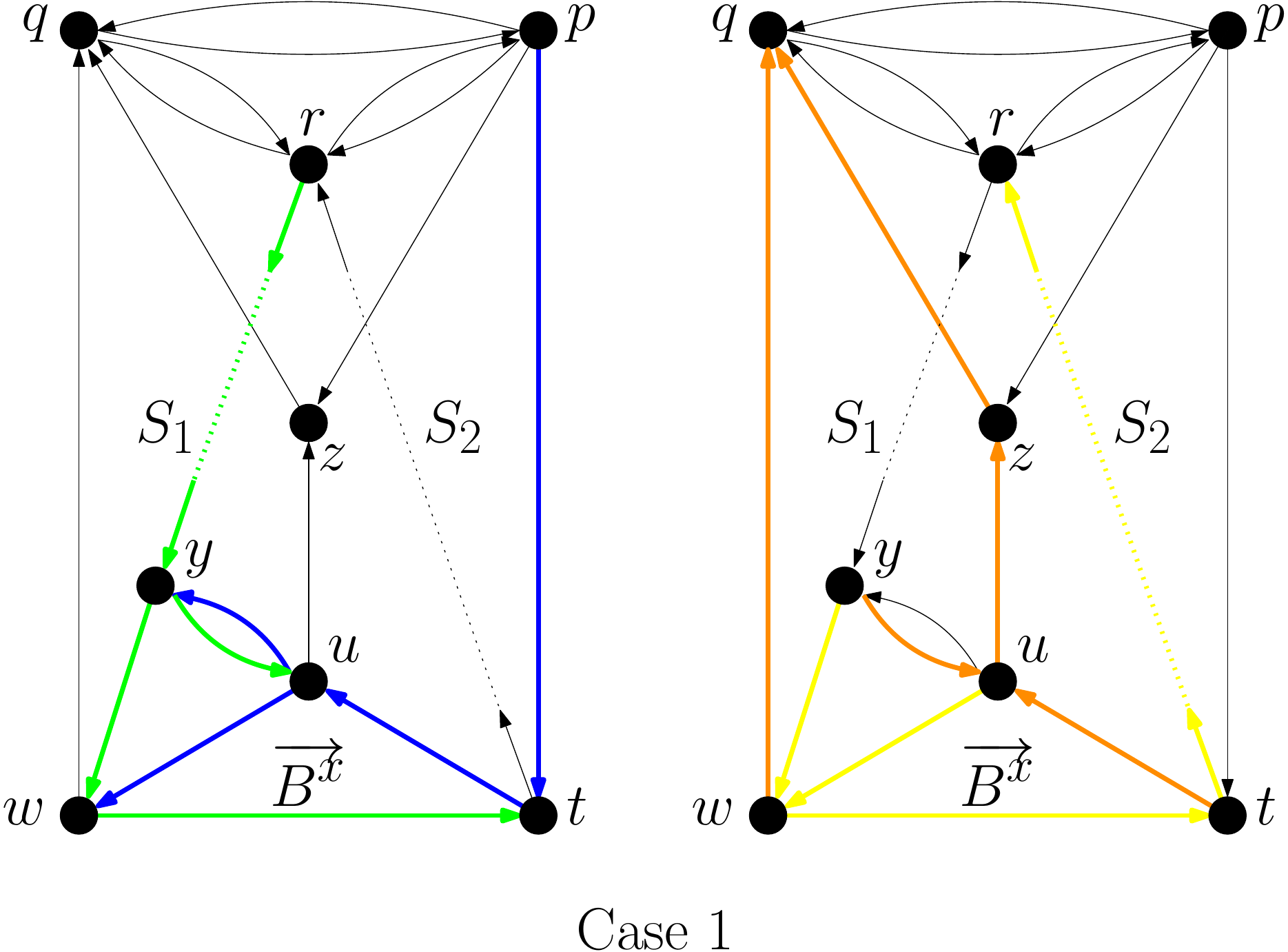}  \hskip 1truecm \includegraphics[width=.45 \linewidth]{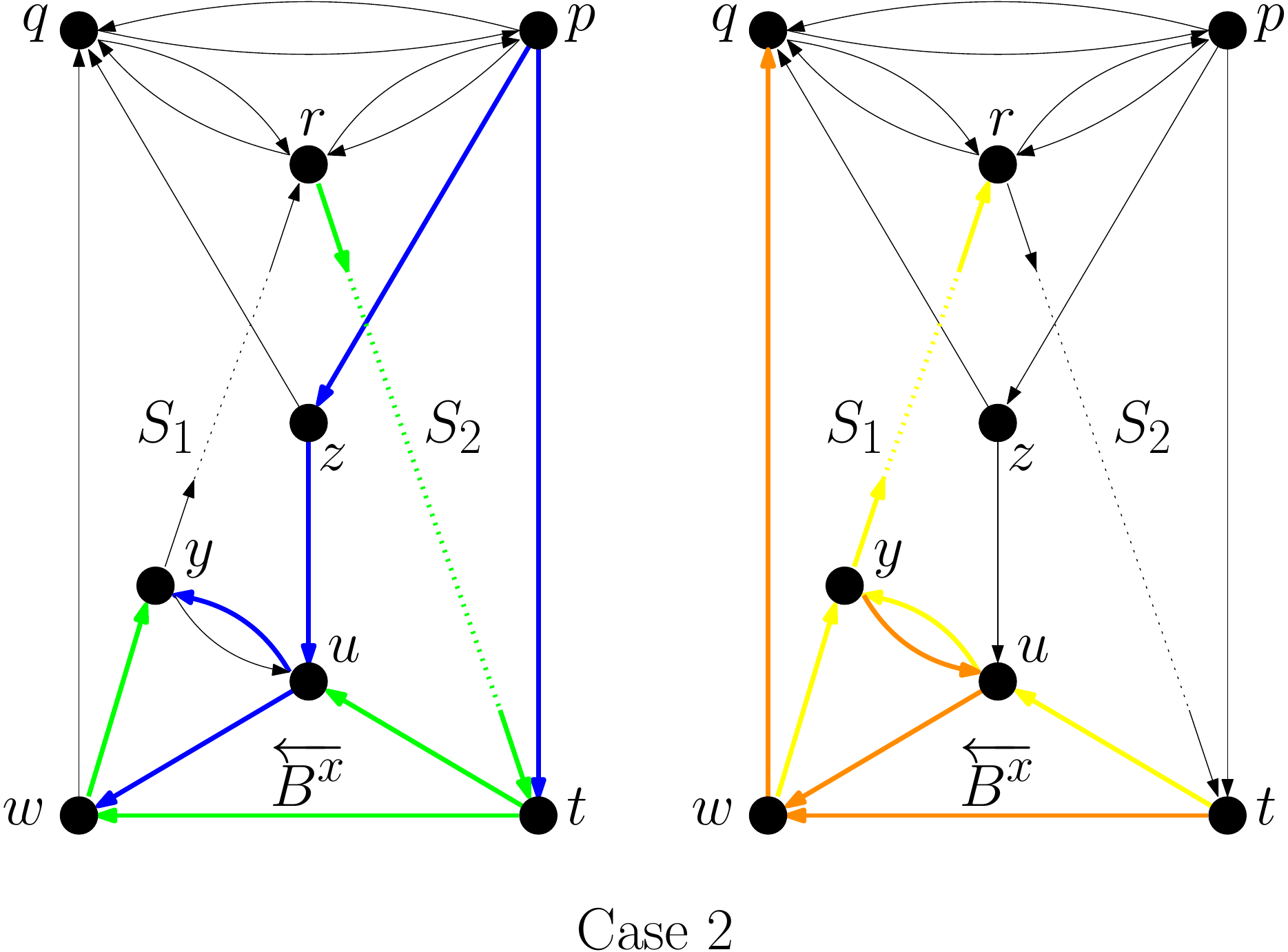}
  \caption{An illustration for the two cases in the proof of Lemma \ref{rdftgzuuh}. The out-arborescences $F_1,F_2$ and  the in-arborescences $F_3,F_4$ are depicted in green, blue, yellow and orange, respectively.}\label{fig:case1}
\end{figure}
 \medskip

\noindent {\bf Case 2.} $\vec{G}[B^x]=\overleftarrow{B^x}$. Observe that $\vec{G}[B_x]$ consists  of a path $S_1$ from $y$ to $r$ disjoint from $\{t, w\}$,  of the arcs $tw, wy$ and  of a path $S_2$ from  $r$ to $t$ disjoint from $\{y, w\}$. 
By \eqref{b}, we have $zu\in \vec{E}$. Let $F_1$ be the $r$-out-arborescence consisting of $S_2$ and the  arcs $tu, tw$ and $wy$. Let $F_2$ be the $p$-out-arborescence consisting of  the  arcs $pt, pz, zu, uw$ and $uy$. Then $F_1$ and $F_2$ contain two $(Q,v)$-paths whose vertex sets only intersect in $v$  for every vertex $v$ in $R_C^x.$ Let $F_3$ be the $r$-in-arborescence consisting of $S_1$ and the  arcs $tu, uy$ and $wy$. Let $F_4$ be the $q$-in-arborescence consisting of  the  arcs $tw, uw,yu$ and $wq$. Then $F_3$ and $F_4$ contain two $(v,Q)$-paths whose vertex sets only intersect in $v$  for every vertex $v$ in $R_C^x$.  An illustration can be found in Figure \ref{fig:case1}.
\medskip

In either case, we obtain by Proposition \ref{vert}, that $\vec{G}$ is 2-vertex-connected in $Q\cup R_C^x$. 
As $(x,C)$ was chosen arbitrarily, we in fact obtained that $\vec{G}$ is 2-vertex-connected in $V-Z.$
\medskip

To finish the proof we  consider some $C\in\mathcal{C}$. By \eqref{c},  $u_C^{x_1}z_C, z_Cu_C^{x_2}\in\vec{E}$ for some $x_1,x_2\in C$. Further,  $z_Cq,pz_C\in A$. Then Proposition \ref{vert} yields that $\vec{G}$  is 2-vertex-connected in $(V-Z)\cup z_C$. As $C$ was chosen arbitrarily, the proof of Lemma \ref{rdftgzuuh} is finished.
\end{proof}

\begin{Lemma}\label{dbjkeb}
There exists a feasible truth assignment for $\Phi$ if and only if $G$ has a $2$-vertex-connected orientation.
\end{Lemma}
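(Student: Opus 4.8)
The plan is to use Lemma~\ref{rdftgzuuh} as a dictionary that translates between orientations of $G$ and truth assignments of $\Phi$, so that a $2$-vertex-connected orientation exists exactly when a feasible assignment exists. The key observation is that condition \eqref{a} of Lemma~\ref{rdftgzuuh} forces each cycle $B^x$ to be oriented as exactly one of the two circuits $\overrightarrow{B^x}$ or $\overleftarrow{B^x}$, and this binary choice is precisely a boolean value for the variable $x$. Thus I would set up the correspondence assigning $x$ the value \emph{true} when $\vec{G}[B^x]=\overrightarrow{B^x}$ and \emph{false} when $\vec{G}[B^x]=\overleftarrow{B^x}$.

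For the forward direction, suppose $\Phi$ has a feasible truth assignment. I would orient each cycle $B^x$ according to the above correspondence, orient the edges $z_Cu_C^x$ as dictated by condition \eqref{b} (namely $u_C^xz_C\in\vec{E}$ iff $\vec{G}[B^x]=\overrightarrow{B^x}$, i.e.\ iff $x$ is true), and orient the remaining edges arbitrarily since only these edges appear in the conditions. Conditions \eqref{a} and \eqref{b} then hold by construction, and the crucial point is to check \eqref{c}: for each clause $C$, feasibility gives at least one true and at least one false variable among the three variables of $C$, say $x_1$ true and $x_2$ false; by the orientation of the edges $z_Cu_C^{x_i}$ this yields $u_C^{x_1}z_C,z_Cu_C^{x_2}\in\vec{E}$, which is exactly \eqref{c}. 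By Lemma~\ref{rdftgzuuh}, $\vec{G}$ is $2$-vertex-connected.

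For the backward direction, suppose $G$ has a $2$-vertex-connected orientation $\vec{G}$. By Lemma~\ref{rdftgzuuh}, conditions \eqref{a}, \eqref{b} and \eqref{c} all hold. Condition \eqref{a} lets me read off a well-defined truth assignment via the same correspondence, and I would verify this assignment is feasible: fixing any clause $C$, condition \eqref{c} provides $x_1,x_2\in C$ with $u_C^{x_1}z_C\in\vec{E}$ and $z_Cu_C^{x_2}\in\vec{E}$. Condition \eqref{b} then tells me that $u_C^{x_1}z_C\in\vec{E}$ forces $\vec{G}[B^{x_1}]=\overrightarrow{B^{x_1}}$, so $x_1$ is true, while $z_Cu_C^{x_2}\in\vec{E}$ forces $\vec{G}[B^{x_2}]=\overleftarrow{B^{x_2}}$, so $x_2$ is false. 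Hence every clause contains both a true and a false variable, so the assignment is feasible.

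I do not expect any serious obstacle here, since the heavy combinatorial work has already been done in Lemma~\ref{rdftgzuuh}: once that equivalence between $2$-vertex-connectivity and conditions \eqref{a}--\eqref{c} is established, this lemma is essentially a bookkeeping exercise matching \eqref{c} with the not-all-equal condition of MNAE3SAT. The only mild care needed is to confirm that the edges left unconstrained by \eqref{a}--\eqref{c} can indeed be oriented freely in the forward direction, and that the two-directional reading of \eqref{b} (true $\leftrightarrow$ $u_C^xz_C$, false $\leftrightarrow$ $z_Cu_C^x$) is applied consistently; combined with Theorem~\ref{sathard}, this lemma then immediately yields Theorem~\ref{thom}.
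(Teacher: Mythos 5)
Your proposal is correct and takes essentially the same approach as the paper: the identical correspondence between truth values and the two circuit orientations $\overrightarrow{B^x}$, $\overleftarrow{B^x}$, the same explicit orientation in the forward direction, and the same reading-off argument via conditions \eqref{b} and \eqref{c} in the backward direction. One trivial remark: there are in fact no ``remaining edges'' to orient arbitrarily, since every edge of $E$ lies either on some cycle $B^x$ or joins some $z_C$ to some $u_C^x$, so that caveat in your forward direction is vacuous.
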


\begin{proof}
First suppose that there exists a feasible truth assignment $f:X \rightarrow \{true,false\}$ for $\Phi$. We  create an orientation $\vec{G}$ of $G$ in the following way: for every $x \in X$, we orient $B^x$ as $\overrightarrow{B^x}$ if $f(x)=true$ and as $\overleftarrow{B^x}$ if $f(x)=false$. Further, for every $(x,C)\in P(\Phi)$, we orient $z_Cu_C^x\in E$ from $u_C^x$ to $z_C$ if  $f(x)=true$ and  from $z_C$ to $u_C^x$ if $f(x)=false$.  Observe that \eqref{a} and \eqref{b} hold. Since $f$ is feasible  for $\Phi$,  \eqref{c} also holds. Then, by Lemma \ref{rdftgzuuh}, $\vec{G}$ is $2$-vertex-connected.
\medskip

Now suppose that $G$ has a $2$-vertex-connected orientation $\vec{G}$. Then, by Lemma \ref{rdftgzuuh}, \eqref{a}, \eqref{b} and \eqref{c}  hold. For every $x \in X$, by \eqref{a}, we have $\vec{G}[B^x]=\overrightarrow{B^x}$ or $\vec{G}[B^x]=\overleftarrow{B^x}$. We can hence define a truth assignment $f$ as follows: we set $f(x)=true$ if $\vec{G}[B^x]=\overrightarrow{B^x}$ and $false$ if $\vec{G}[B^x]=\overleftarrow{B^x}$. For every $C \in \mathcal{C}$, by \eqref{c}, there exist arcs $u_C^{x_1}z_C$ and $z_Cu_C^{x_2}$ for some $x_1,x_2\in C$. By \eqref{b}, we have $\vec{G}[B^{x_1}]=\overrightarrow{B^{x_1}}$ and $\vec{G}[B^{x_2}]=\overleftarrow B^{x_2}$. We obtain that $f(x_1)=true$ and $f(x_2)=false$. This implies that $f$ is feasible for $\Phi$.
\end{proof}
\bigskip

By Lemma \ref{dbjkeb} and Theorem \ref{sathard}, the proof of Theorem \ref{thom} is finished.  

\section{Orientations for $2T$-connectivity}\label{2-T}

This section is dedicated to proving Theorem \ref{huoh}.
\medskip

\begin{proof}(of Theorem \ref{huoh})
Necessity is evident.

To prove the sufficiency, let $H$ be obtained from $G=(V,E)$ by blowing up every vertex $v \in V-T$ into a double cycle $C_v$ on a vertex set of size $\max\{3,\lceil\frac{d_G(v)}{2}\rceil\}$ such that every new vertex is incident to a set $F_v$ of at most $2$ edges not belonging to $C_v$.

\begin{Claim}\label{rfg7zuho}
$H$ is 4-edge-connected and $H-w$ is 2-edge-connected for all $w \in V(H)$.
\end{Claim}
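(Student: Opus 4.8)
The plan is to derive both assertions of Claim \ref{rfg7zuho} from Proposition \ref{blowup}, combined with three elementary edge-connectivity facts about the gadgets involved: a double cycle is $4$-edge-connected; a \emph{double path} (a path with every edge duplicated) on at least two vertices is $2$-edge-connected; and deleting at most $j$ edges from an $\ell$-edge-connected graph leaves an $(\ell-j)$-edge-connected graph, since $d_{G-F}(X)\ge d_G(X)-|F|$ for every vertex set $X$ and every edge set $F$.

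First I would settle the $4$-edge-connectivity of $H$. As $G$ is $4$-edge-connected and each double cycle $C_v$ is $4$-edge-connected, I blow up the vertices of $V-T$ one at a time and apply Proposition \ref{blowup} at each step; the graph remains $4$-edge-connected throughout, and after the last blow-up it equals $H$. The same iteration will be reused below on intermediate graphs.

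For the second assertion I would distinguish two types of vertex $w\in V(H)$. If $w\in T$, then $w$ survived the construction unchanged, and $H-w$ is exactly the graph obtained from $G-w$ by blowing up every $v\in V-T$ into $C_v$ (deleting $w$ only removes the edges of $G$ incident to $w$). By hypothesis $G-w$ is $2$-edge-connected, each $C_v$ is $2$-edge-connected, so iterating Proposition \ref{blowup} shows $H-w$ is $2$-edge-connected. The remaining, and main, case is $w\in V(C_v)$ for some $v\in V-T$; here no assumption on $G-v$ is available (indeed $v\notin T$), so a different argument is needed. For this case let $G^{(v)}$ denote the graph obtained from $G$ by blowing up every vertex of $V-T$ except $v$; by the iteration above $G^{(v)}$ is $4$-edge-connected, and $H$ is obtained from $G^{(v)}$ by blowing up $v$ into $C_v$. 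Let $E_w$ be the set of at most two edges incident to $w$ that do not belong to $C_v$, i.e. the edges inherited from the edges of $G$ at $v$. Contracting $C_v-w$ back to a single vertex shows that $H-w$ is precisely the graph obtained from $G^{(v)}-E_w$ by blowing up that vertex into $C_v-w$. Now $C_v-w$ is a double path on $|C_v|-1\ge 2$ vertices, hence $2$-edge-connected, while $G^{(v)}-E_w$ arises from the $4$-edge-connected graph $G^{(v)}$ by deleting at most two edges and is therefore $2$-edge-connected. A final application of Proposition \ref{blowup} then yields that $H-w$ is $2$-edge-connected, completing the claim.

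I expect the main obstacle to be the bookkeeping in this last case: verifying that deleting $w$ from $H$ and contracting the rest of its double cycle really reproduces a blow-up of $G^{(v)}-E_w$, with the surviving external edges distributed among $V(C_v-w)$ exactly as a blow-up requires. Once this identification is made precise, the whole claim reduces to the two edge-connectivity facts that a double path is $2$-edge-connected and that removing at most two edges from a $4$-edge-connected graph preserves $2$-edge-connectivity.
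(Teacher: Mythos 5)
Your proposal is correct and follows essentially the same route as the paper: iterate Proposition \ref{blowup} for the $4$-edge-connectivity of $H$ and for the case $w\in T$, and in the main case $w\in V(C_v)$ delete the at most two external edges at $w$ (the paper's $F_v$, your $E_w$), observe that a $4$-edge-connected graph minus two edges is $2$-edge-connected and that $C_v-w$ is a $2$-edge-connected double path, and apply Proposition \ref{blowup} once more. The only difference is bookkeeping order (the paper deletes $F_v$ from $G$ before blowing up, you blow up the other vertices first), which changes nothing of substance.
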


\begin{proof}
Since $G$ and $C_v$ for all $v\in V-T$ are $4$-edge-connected, so is $H$ by Proposition \ref{blowup}.

Now let $w \in V(H)$. If $w\in T$, then since $G-w$ and $C_v$ for all $v\in V-T$ are $2$-edge-connected, so is $H-w$ by Proposition \ref{blowup}. Otherwise, $w\in V(C_u)$ for some $u\in V-T.$ Note that $G'=G-F_u$ is $2$-edge-connected because $G$ is $4$-edge-connected. Further, $C_u-u$ is $2$-edge-connected. Observe that $H-w$ is the graph obtained from $G'$ by blowing up every vertex $v \in (V-u)-T$ into $C_v$  and then blowing up $u$ into $C_u-u$. It follows, by Proposition \ref{blowup}, that $H-w$ is $2$-edge-connected.
\end{proof}

\medskip

By Claim \ref{rfg7zuho} and Theorem \ref{thom}, we obtain that $H$ has a 2-vertex-connected orientation $\vec{H}$. Now let $\vec{G}$ be obtained from contracting $V(C_v)$ into $v$ for all $v \in V(G)-T$. We will show that $\vec{G}$ is $2T$-connected. Since $\vec{H}$ is 2-vertex-connected, we obtain that $\vec{H}$ is also 2-arc-connected. As $\vec{G}$ is obtained from $\vec{H}$ through contractions, we obtain that $\vec{G}$ is also 2-arc-connected. Now let $v \in T$. Since $\vec{H}$ is 2-vertex-connected, we obtain that $\vec{H}-v$ is strongly connected. As $\vec{G}-v$ is obtained from $\vec{H}-v$ through contractions, we obtain that $\vec{G}-v$ is also strongly connected.
\end{proof}
\section{Conclusion}\label{conc}
We show that the problem of deciding whether a mixed graph has a 2-vertex-connected orientation is NP-hard and give a characterization for the graphs admitting a $2T$-connected orientation. The first result closes the dichotomy for the problem of finding $k$-vertex-connected orientations of mixed graphs.
\medskip

In the spirit of Conjecture \ref{appro}, we pose the following problem.

\begin{Conjecture}\label{neu}
There is a function $f:\mathbb{Z}_+\rightarrow \mathbb{Z}_+$ such that every $f(k)$-vertex-connected mixed graph has a $k$-vertex-connected orientation for all $k \in \mathbb{Z}_+$.
\end{Conjecture}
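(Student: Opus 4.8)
The plan is to try to reduce Conjecture~\ref{neu} to its purely undirected counterpart, Conjecture~\ref{appro}, so that an eventual proof of the latter would yield the former. Let $G=(V,A\cup E)$ be a mixed graph, and note that the vertex-connectivity of $G$ is a property of its underlying undirected graph $G_u$, obtained by forgetting the directions of the arcs in $A$. Thus if $G$ is $f(k)$-vertex-connected then $G_u$ is $f(k)$-vertex-connected, and if $f(k)\ge g(k)$, where $g$ is the function of Conjecture~\ref{appro}, then $G_u$ already admits a $k$-vertex-connected orientation. The entire difficulty is that such an orientation is free to reverse the prescribed arcs of $A$, whereas a valid orientation of the mixed graph $G$ must keep every arc of $A$ exactly as given.

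The natural way to remove this freedom is to design, for each arc $uv\in A$, a small undirected gadget $\Gamma_{uv}$ replacing the arc, with the property that in every $k$-vertex-connected orientation of the resulting graph the gadget is traversed from $u$ to $v$ and not from $v$ to $u$; contracting each gadget back to a single arc should then turn a $k$-vertex-connected orientation of the gadget-graph into a $k$-vertex-connected orientation of $G$ that respects $A$. First I would attach the gadget so that it decreases vertex-connectivity by at most a constant factor, which is exactly the setting in which Proposition~\ref{blowup} and the blow-up technique of Section~\ref{2-T} were useful; the hope is that replacing $f$ by a suitably larger function absorbs the constant loss. If such a direction-forcing gadget can be built, then Conjecture~\ref{neu} follows from Conjecture~\ref{appro} with $f$ obtained from $g$.

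The main obstacle is twofold and genuine. First, Conjecture~\ref{appro} itself is open for every $k\ge 3$, so this route can at best be conditional. Second, and more subtly, forcing the direction of an arc is precisely what vertex-connectivity does \emph{not} do on its own: a degree-two pass-through vertex, for instance, is oriented as a directed path in any strongly connected orientation, but the sense of that path is not fixed, so no symmetric local gadget can force a direction. Breaking this symmetry seems to require a genuinely asymmetric gadget whose interaction with the global connectivity requirement is delicate to control, and I expect this to be the crux of any proof along these lines.

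Absent such a reduction, an unconditional first target is the case $k=2$, where the undirected characterizations of Theorem~\ref{thom2} and Theorem~\ref{huoh} are available. Here one would try to show directly that there is a constant $c$ such that every $c$-vertex-connected mixed graph has a $2$-vertex-connected orientation. Since high vertex-connectivity of $G_u$ forces $G_u$ to be $4$-edge-connected and $G_u-v$ to be $2$-edge-connected for all $v$, Theorem~\ref{thom2} supplies a $2$-vertex-connected orientation of $G_u$, and the task reduces to correcting the at most $|A|$ badly oriented arcs by local reversals or re-routings that preserve $2$-vertex-connectivity, exploiting the abundance of alternative internally disjoint paths guaranteed by the high connectivity. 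Note that the NP-hardness of Theorem~\ref{thom} does not obstruct this, since here one only seeks a sufficient connectivity condition, not a decision procedure. Even this special case, however, inherits the same symmetry-breaking difficulty whenever an arc sits at a connectivity bottleneck, which is why I regard the construction of a robust direction-forcing mechanism as the decisive step.
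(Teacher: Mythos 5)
You have not proved anything, and to be fair, neither does the paper: the statement you were given is Conjecture~\ref{neu}, which the paper poses as an \emph{open problem} in its conclusion (noting only that it would imply Conjecture~\ref{appro} for fixed $k\ge 3$). So there is no proof in the paper to match, and your text is, by your own admission, a research plan rather than a proof: it is conditional on Conjecture~\ref{appro}, itself open for every $k\ge 3$, and on a gadget construction that you do not carry out. As it stands, nothing is established, not even the case $k=2$.

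Beyond incompleteness, the central device you propose is impossible as specified. You ask for an undirected gadget $\Gamma_{uv}$ such that \emph{every} $k$-vertex-connected orientation of the gadget-graph traverses $\Gamma_{uv}$ from $u$ to $v$. But if $\vec{H}$ is any $k$-vertex-connected orientation, the digraph obtained by reversing every arc of $\vec{H}$ is again $k$-vertex-connected (Menger's paths simply reverse), and it traverses the gadget from $v$ to $u$. Hence no undirected construction whatsoever — symmetric or not — can force the absolute direction of a single arc; your worry about ``symmetric local gadgets'' understates the obstruction. The most one could hope for is a construction forcing all gadgets to be oriented \emph{consistently} (all forward or all reversed), after which a global reversal fixes the arc set $A$ while re-orienting only edges of $E$, which is harmless; this is in the spirit of how the paper's own NP-hardness reduction in Section~\ref{redu} links orientations of the cycles $B^x$ to one another, though there the pre-oriented arcs break the reversal symmetry. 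Your fallback for $k=2$ (take Thomassen's orientation of the underlying graph and ``correct'' the badly oriented arcs) is likewise only a hope: you give no mechanism showing that local reversals preserving $2$-vertex-connectivity exist, and this is exactly where the difficulty of the conjecture lies. You are right on one point: the paper's Theorem~\ref{thom} (NP-hardness of 2VCOMG) does not obstruct a sufficient connectivity threshold, so the conjecture is not ruled out — but it remains unresolved by your argument.
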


Clearly, for any fixed $k \geq 3$, Conjecture \ref{neu} implies Conjecture \ref{appro}. It would be interesting to see whether Conjecture \ref{neu} is tractable more easily for $k=2$.

\end{document}